\def\@@and{\MakeLowercase{and}}
\newcommand{\eps}{\varepsilon}
\newcommand{\bbr}{\mathbb{R}}
\newcommand{\bbn}{\mathbb{N}}
\newcommand{\bbq}{\mathbb{Q}}
\newcommand{\calv}{\mathcal{V}}
\newcommand{\calm}{\mathcal{M}}
\newcommand{\calmx}{\mathcal{M}(X)}
\newcommand{\dd}{\mathop{}\!\mathrm{d}}
\newtheorem{thm}{Theorem}[section]
\newtheorem{lem}[thm]{Lemma}
\newtheorem{prop}[thm]{Proposition}
\newtheorem{cor}[thm]{Corollary}
\theoremstyle{definition}
\newtheorem{remark}[thm]{Remark}
\numberwithin{equation}{section}
\DeclareMathOperator{\per}{Per}
\DeclareMathOperator{\conv}{conv}
\DeclareMathOperator{\supp}{supp}
\title[T\MakeLowercase{he mean orbital pseudo-metric and the space of invariant measures}] 
{T\MakeLowercase{he mean orbital pseudo-metric and the space of invariant measures}}
\author[J. L\MakeLowercase{i}]{J\MakeLowercase{ian} Li}
\address[J. Li]{Department of Mathematics,
	Shantou University, Shantou, 515063, Guangdong, China}
\email{lijian09@mail.ustc.edu.cn}
\urladdr{https://orcid.org/0000-0002-8724-3050}
\author[Y. X\MakeLowercase{iao}]{Y\MakeLowercase{uanfen} Xiao}
	\address[Y. Xiao]{School of Mathematical Sciences, Xiamen University, Xiamen, 
		361005, Fujian, China}
	\email{xyuanfen@mail.ustc.edu.cn}
\urladdr{https://orcid.org/0000-0002-8468-271X}
	\subjclass[2020]{Primary: 37B65; Secondary: 37B05}
 \keywords{Mean orbital pseudo-metric, periodic point, invariant measure, ergodic measure, generic points, asymptotic orbital average shadowing property}
\begin{document}

\maketitle

\begin{abstract}
We study the mean orbital pseudo-metric for Polish dynamical systems and its connections with properties of the space of invariant measures. 
We give equivalent conditions for when the set of invariant measures generated by periodic points is dense in the set of 
ergodic measures and the space of invariant measures. 
We also introduce the concept of asymptotic orbital average shadowing property and show that it implies that every non-empty compact connected  subset of the space of invariant measures has a generic point.
\end{abstract}

\section{Introduction}

By a Polish dynamical system, we mean a pair $(X,T)$ where $X$ is a Polish space and $T\colon X\to X$ is a continuous map.
A Borel probability measure $\mu$ on $X$ is called $T$-invariant 
if $\mu(A)=\mu(T^{-1}A)$ for all Borel subset of $X$. By the well-known Kryloff-Bogoliouboff theorem, there always exists some invariant measure provided that $X$ is compact. 
If $(X,T)$ has some periodic points, then any periodic orbit supports a unique invariant measure.
Endow the space $\calm_T(X)$ of all $T$-invariant measures with the weak$^*$-topology.  
This paper is devoted to  exploring the space of invariant measures.
Especially, we will consider the following natural question: 
when the set of invariant measures generated by periodic points
is dense in the space $\calm_T(X)$.

In~\cite{Par1961} Parthasarathy obtained that if $X$ is a product of countably many copies of a Polish space and $T$ is the shift transformation,  then the set of invariant measures generated by periodic points is dense in $\calm_T(X)$. 
In~\cite{Sig1974} Sigmund proved that if a compact dynamical system $(X,T)$ has the periodic specification property then the set of invariant measures generated by periodic points is dense in $\calm_T(X)$. 
Recently in~\cite{GK2018} Gelfert and Kwietniak introduced two properties 
(linkability and closablity) related to the set of periodic points 
and showed that if  a Polish dynamical system is closable with respect to a linkable subset of periodic points then the set of invariant measures generated by periodic points is dense in $\calm_T(X)$. 

Let $(X,T)$ be a Polish dynamical system and  $d$ be a bounded compatible complete metric on $X$. 
The Besicovitch pseudo-metric on $X$ which measures the average separation of points along orbits of a dynamical system $(X,T)$ is defined by
\[  
D_B(x,y) = \limsup_{n\to\infty} \frac{1}{n} \sum_{i=0}^{n-1} d(T^ix,T^iy)
\]
for $x,y\in X$.
In \cite{KLO2017}, Kwietniak et al.\@  studied the Besicovitch pseudo-metric for a compact dynamical system and showed that the map from a point $x$ to the set $\calv_T(x)$ of invariant measures quasi-generated by $x$( that is, $\calv_T(x)=\{\mu\in \calm_T(X)\colon \text{there exists a sequence } \{k_n\}_{n=1}^{\infty} \newline 
\text{such that } \lim_{n\to\infty}\delta_{T^{k_n}x}=\mu  \} $) is uniformly continuous from $X$ equipped with the  Besicovitch pseudo-metric to the space of non-empty compact subsets of $\calm_T(X)$ equipped with the Hausdorff metric.
Very recently, in \cite{ZZ2020},  L. Zheng and Z. Zheng introduced a weak form of Besicovitch pseudo-metric which ignores the order of points on the orbits.
Given a dynamical system $(X,T)$, the mean orbital pseudo-metric 
$
\bar E$ on $X$ is defined by 
\[  
\bar E(x,y) = \limsup_{n\to\infty} \min_{\sigma\in S_n} 
\frac{1}{n} \sum_{i=0}^{n-1} d(T^ix,T^{\sigma(i)}y)
\]
for $x,y\in X$,
where $S_n$ denotes the permutation group on $\{0,1,\dotsc,n-1\}$.
It should be noticed that the authors of \cite{ZZ2020} did not give $\bar E$ any name  and denoted it by $\bar F$.
The authors of \cite{CKLP2022} proposed the name of mean orbital pseudo-metric.
They also  decided to change the notation to
avoid confusion with the Feldman-Katok pseudo-metric,  which is denoted by $\bar F_K$.
L. Zheng and Z. Zheng proved in \cite{ZZ2020} that a compact  dynamical system $(X,T)$ is uniquely ergodic if and only if $\bar E(x,y)=0$ for all $x,y\in X$.
Cai et al. showed in \cite{CKLP2022}  that for a compact dynamical system $(X,T)$ the map $x\mapsto \calv_T(x)$ is uniformly continuous from $X$ equipped with the mean orbital pseudo-metric to the space of non-empty compact subsets of $\calm_T(X)$ equipped with the Hausdorff metric. 
It is observed in  \cite{CKLP2022}  and \cite{XZ2021} that the mean orbital pseudo-metric is closely related to the Wasserstein metric on the space of Borel probability measures. 

The aim of this paper is to apply another version of mean orbital pseudo-metric to study the space of invariant measures. 
Inspirited by the properties of linkability and closablity, we obtain the following equivalent
conditions for when the set of invariant measures generated by periodic points is dense
in the set of ergodic measures or the space of invariant measures, 
which is one of the main results of this paper.

\begin{thm} \label{thm:main-result1}
Let $(X,T)$ be a Polish dynamical system and $K\subset \per(T)$ (the set of periodic points).
Then 

\begin{enumerate} 
\item the set of invariant measures generated by points in $K$ is 
dense in the set of ergodic measures of $(X,T)$ if and only if 
 for every regular point $y\in X$ (generic for some ergodic measure), $\eps>0$ and $N\in\bbn$ there exist $x\in K$
and $n\in\bbn$ such that $n\geq N$, $T^nx =x$ and 
\[
\min_{\sigma\in S_{n} }\frac{1}{n}\sum_{i=0}^{n-1}d(T^ix,T^{\sigma(i)}y )<\eps;
\]
\item the set of invariant measures generated by points in $K$ is 
dense in the set of invariant measures of $(X,T)$ if and only if 
for every $k\geq 1$ and regular points $y_1,y_2,\dotsc,y_k\in X$, $\eps>0$ and $N\in\bbn$
there exist $x\in K$ and $n\in\bbn$ such that $n\geq N$, $T^{kn}x=x$ and   
\[
\min_{\sigma\in S_{kn}}\frac{1}{kn}
\sum_{i=0}^{kn-1}d(z_i,T^{\sigma(i)}x)<\eps, 
\] 
where 
$ z_{i+(j-1)n}=T^iy_j $ for $ 0\leq i\leq n-1 $ and $ 1\leq j\leq k $.
\end{enumerate}
\end{thm}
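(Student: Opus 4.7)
The central observation driving the proof is the identification
\[
\min_{\sigma\in S_n}\frac{1}{n}\sum_{i=0}^{n-1}d(T^ix,T^{\sigma(i)}y)\;=\;W_1\bigl(\mu_x^n,\mu_y^n\bigr),
\]
where $\mu_z^n:=\frac{1}{n}\sum_{i=0}^{n-1}\delta_{T^iz}$ and $W_1$ denotes the $L^1$-Wasserstein distance; this is standard, via Birkhoff--von Neumann, since the optimum of a linear functional on the doubly stochastic matrices is attained at a permutation matrix. Because $d$ is a bounded compatible metric on the Polish space $X$, $W_1$ metrizes the weak$^*$ topology on $\calm(X)$. Two auxiliary facts will be used throughout: if $T^px=x$ and $p\mid n$, then $\mu_x^n$ equals the invariant measure $\mu_x$ generated by $x$; and if $y$ is regular, say generic for an ergodic measure $\nu$, then $\mu_y^n\to\nu$ in $W_1$. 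With these identifications in place, each side of each equivalence becomes a weak$^*$-approximation statement, and the problem reduces to classical ergodic-theoretic bookkeeping.

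For part (1), the ``if'' direction goes as follows. Fix an ergodic $\nu$ and $\delta>0$; by Birkhoff's theorem (valid for any invariant Borel probability measure on a Polish system) choose $y$ generic for $\nu$, and let $N$ be large enough that $W_1(\mu_y^n,\nu)<\delta/2$ whenever $n\geq N$. The hypothesis, applied with $\eps=\delta/2$, then produces $x\in K$ and $n\geq N$ with $T^nx=x$ and $W_1(\mu_x,\mu_y^n)<\delta/2$, whence $W_1(\mu_x,\nu)<\delta$, proving density. The ``only if'' direction is symmetric: given a regular $y$ generic for some ergodic $\nu$ and given $\eps>0$, $N\in\bbn$, use density in the ergodic measures to pick $x\in K$ with $W_1(\mu_x,\nu)<\eps/2$, and then take $n\geq N$ to be any multiple of the period of $x$ large enough that $W_1(\mu_y^n,\nu)<\eps/2$; the triangle inequality and the identity $\mu_x^n=\mu_x$ finish the argument.

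For part (2), the crucial additional observation is that the concatenated sequence $z_0,z_1,\dotsc,z_{kn-1}$ has empirical measure
\[
\frac{1}{kn}\sum_{i=0}^{kn-1}\delta_{z_i}\;=\;\frac{1}{k}\sum_{j=1}^{k}\mu_{y_j}^n,
\]
which converges in weak$^*$ to $\frac{1}{k}\sum_{j=1}^{k}\nu_j$, where $\nu_j$ is the ergodic measure for which $y_j$ is generic. Hence the hypothesis of (2) says precisely that every equal-weight convex combination of (arbitrarily many) ergodic measures can be $W_1$-approximated by measures $\mu_x$ with $x\in K$. To pass from this to density in the whole $\calm_T(X)$, one invokes the ergodic decomposition theorem on the Polish system: every invariant $\mu$ is a weak$^*$-limit of convex combinations $\sum_{j=1}^{r}q_j\nu_j$ of ergodics with positive rational weights $q_j=m_j/k$ (common denominator $k$); repeating each $\nu_j$ exactly $m_j$ times converts this into an equal-weight average of $k$ (not necessarily distinct) ergodic measures, for which generic points $y_j$ can be selected. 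The two implications of (2) then run exactly as in part (1).

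The principal technical hurdle is the \emph{simultaneity} built into (2): a single $x\in K$ and a single $n$ must serve for all $k$ components at once. This is managed by first fixing $x\in K$ so that $\mu_x$ approximates the target equal-weight average, and then taking $n$ to be a common multiple of the period of $x$ and of integers $N_j$ chosen so that $W_1(\mu_{y_j}^n,\nu_j)$ is uniformly small in $j$. A secondary concern is the non-compactness of $X$, which could in principle break the equivalence of $W_1$ with weak$^*$ convergence; however, the boundedness of $d$ ensures every Borel probability measure has finite first moment, so Kantorovich--Rubinstein duality and the Birkhoff/ergodic-decomposition machinery all go through unchanged in the Polish setting.
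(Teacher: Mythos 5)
Your proposal is correct and follows essentially the same route as the paper: the paper's Lemma~\ref{lem:finite-seq-metric} is exactly your permutation/Wasserstein identification (with $\gamma$ in place of $W_1$, which are equal by Kantorovich--Rubinshtein), part (1) is proved as your triangle-inequality argument (Theorem~\ref{thm:per-dense-B}), and part (2) is handled via the reduction to rational equal-weight convex combinations of ergodic measures and a common-multiple choice of $n$ (Remark~\ref{rem:conv-hull} and Theorem~\ref{thm:per-dense-conv-B}). The only cosmetic difference is that the paper imposes the slightly weaker condition $p\mid kn$ rather than $p\mid n$ in the necessity direction of (2), but your stronger choice works just as well.
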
 

As applications, we show that for a continuous interval map $f\colon [0,1]\to [0,1]$ 
if it has zero topological entropy then the set of invariant measures generated by periodic points of $f$ is
dense in the set of ergodic measures of $f$, and if it is transitive then the set of invariant measures generated by periodic points of $f$ is 
dense in the set of invariant measures of $f$.

Another important consequence of a compact dynamical system $(X,T)$ with the 
specification property is that for every non-empty closed connected subset $V$ of $\calm_T(X)$ there exists a point $x\in X$ such that $\calv_T(x)=V$ 
\cites{Sig1974,Sig1977}.
This result was generalized for compact dynamical systems with various types of weak specification property.
In particular, it is shown in \cite{DTY2015} and \cite{KLO2017}
if a compact dynamical system has the asymptotic average shadowing property
then for every non-empty closed connected subset $V$ of $\calm_T(X)$ there exists a point $x\in X$ such that $\calv_T(x)=V$.
In this paper we introduce a weak version of asymptotic average shadowing property.
We say that a Polish dynamical system $(X,T)$  has the asymptotic orbital average shadowing property if  for every asymptotic average pseudo-orbit  $\{x_i\}_{i=0}^\infty$ of $X$, that is,
\[
\lim_{n\to\infty}\frac{1}{n}\sum_{i=0}^{n-1}d(Tx_i,x_{i+1})=0,
\]
there exists $x\in X$ such that
\[
\lim_{n\to\infty} \min_{\sigma\in S_n }\frac{1}{n}\sum_{i=0}^{n-1}d(x_i,T^{\sigma(i)}x)=0.
\]

The second main result of this paper is as follows.

\begin{thm}\label{thm:AOASP-V}
Let $(X,T)$ be a Polish dynamical system and  $\calv_T(x)=\{\mu\in \calm_T(X)\colon \text{ there } \newline 
\text{exists a sequence } \{k_n\}_{n=1}^{\infty}   
\text{such that } \lim_{n\to\infty}\delta_{T^{k_n}x}=\mu  \} $.
If $(X,T)$ has the asymptotic orbital average shadowing property,
then for every non-empty compact connected  subset $V$ of $\calm_T(X)$, there exists $x\in X$ with $\calv_T(x)=V$. 
\end{thm}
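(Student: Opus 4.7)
The strategy follows Sigmund's classical construction for the specification case, substituting the asymptotic orbital average shadowing property for genuine shadowing. The key link is the elementary fact that the Kantorovich--Wasserstein distance between the empirical measures $\frac{1}{n}\sum_{i<n}\delta_{x_i}$ and $\frac{1}{n}\sum_{i<n}\delta_{T^ix}$ is bounded above by $\min_{\sigma\in S_n}\frac{1}{n}\sum_{i<n}d(x_i,T^{\sigma(i)}x)$, so the quantity controlled by the hypothesis dominates a genuine Wasserstein distance. The plan is to build an asymptotic average pseudo-orbit $(x_i)_{i\ge 0}$ in $X$ whose partial empirical measures $\mu_n := \frac{1}{n}\sum_{i=0}^{n-1}\delta_{x_i}$ have $V$ as their weak$^*$-limit set, and then invoke the hypothesis and this bound to shadow it by the orbit of a single point.

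Fix a compatible metric $\rho$ on $\calm_T(X)$ (I take $\rho$ to be the Kantorovich--Wasserstein metric, which metrises weak$^*$-convergence since $d$ is bounded). Using nested finite $1/k$-nets of $V$ linked through $V$ by connectedness, I select $\{\eta_k\}_{k\ge 1}\subset V$ that is dense in $V$ with $\rho(\eta_k,\eta_{k+1})\to 0$. For each $\eta_k$ the ergodic decomposition provides a rational convex combination $\sum_j a_{k,j}\nu_{k,j}$ of ergodic measures close to $\eta_k$, and Birkhoff's theorem produces generic points $y_{k,j}$ of $\nu_{k,j}$. Form block $k$ by \emph{finely interleaving} the $y_{k,j}$: a cycle of length $n_k$ devotes $\lfloor a_{k,j}n_k\rfloor$ consecutive steps to $y_{k,j}$, and block $k$ consists of $M_k$ such cycles. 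Taking $n_k,M_k\to\infty$ and $M_kn_k\gg\sum_{j<k}M_jn_j$ and concatenating the blocks gives $(x_i)$; since $d$ is bounded and jumps between segments occur with density $O(1/n_k)$, this is an asymptotic average pseudo-orbit.

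The hypothesis supplies $x\in X$ with $\min_{\sigma\in S_n}\frac{1}{n}\sum_{i<n}d(x_i,T^{\sigma(i)}x)\to 0$, so by the Wasserstein bound the empirical measures of $(x_i)$ and of the orbit of $x$ are asymptotic in $\rho$, and therefore $\calv_T(x)$ coincides with the weak$^*$-limit set of $\{\mu_n\}$. Density of $\{\eta_k\}$ in $V$, together with the fact that the partial average at the end of block $k$ is close to $\eta_k$, yields $V\subseteq\calv_T(x)$. For the reverse inclusion, the fine interleaving makes each block's own partial average close to $\eta_k$ already after a single cycle, while the rapid growth $M_{k-1}n_{k-1}\gg n_k$ keeps the overall partial average close to $\eta_{k-1}$ before that cycle completes; in either regime $\mu_n$ is essentially a convex combination of (measures close to) $\eta_{k-1}$ and $\eta_k$, which lies within $\rho(\eta_{k-1},\eta_k)\to 0$ of $V$, so every weak$^*$-limit of $\{\mu_n\}$ lies in the closed set $V$.

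The main obstacle is this ``convex interpolation'' issue: since $V$ is only connected, not convex, partial empirical measures need not lie in $V$, only near it. Overcoming it exploits both the fine-interleaving design, which stabilises each block's partial average near $\eta_k$ within one cycle of length $n_k$, and the linearity $\rho((1-t)\alpha+t\beta,\alpha)=t\,\rho(\alpha,\beta)$ of the Kantorovich--Wasserstein metric along convex combinations, which forces the interpolating measures to lie within $\rho(\eta_{k-1},\eta_k)\to 0$ of $V$. A minor technical point is the existence of generic points for ergodic measures on Polish $X$, handled by applying Birkhoff's theorem to a countable weak$^*$-convergence-determining class of bounded Lipschitz functions.
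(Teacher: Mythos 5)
Your proposal is correct and follows essentially the same route as the paper: build an asymptotic average pseudo-orbit whose empirical measures have limit set $V$ (the paper's Proposition~4.4), invoke the asymptotic orbital average shadowing property to produce a shadowing point $x$, and use the identity between $\min_{\sigma\in S_n}\frac{1}{n}\sum d(x_i,T^{\sigma(i)}x)$ and the Wasserstein distance of empirical measures (Lemma~2.1, via Lemma~4.1) to conclude $\calv_T(x)=V$. The paper factors the argument into these separate lemmas, but your block construction, the $\gamma(\eta_k,\eta_{k+1})\to 0$ dense sequence, and the convex-interpolation control of intermediate partial averages match the ones used there.
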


The paper is organized as follows. Section 2 contains preliminaries on the space of Borel probability measures on a Polish space and topological dynamics. 
Section 3 is devoted to studying the question when the set of invariant measures generated by periodic points is dense in the space  of invariant measures and proving Theorem 1.1. 
Theorem 1.2 is proved in Section 4.

\section{Preliminaries}

\subsection{Polish space and Borel probability measure}
Let $X$ be a  Polish space, that is, $X$ is a topological space, which is separable and
completely metrizable. Fix a compatible bounded complete metric $d$ on $X$.
Let $C_b(X)$ be the collection of all bounded real functions on $X$.
We say that  $f\colon X\to\bbr$ is a Lipschitz function 
if there exists a constant $L\geq 0$ such that $|f(x)-f(y)|\leq Ld(x,y)$ for all $x,y\in X$.
Let 
\[
p_L(f)=\sup_{x\neq y} \frac{|f(x)-f(y)|}{d(x,y)}.
\]
Then it is clear that $f$ is a Lipschitz function if and only if $p_L(f)<\infty$.

Let $\calmx$ be the set of all Borel probability measures on $X$. 
For $\mu\in\calmx$, the support of $\mu$, denoted by $\supp(\mu)$, is the smallest closed subset of $X$ with full $\mu$-measure.
For $x\in X$, let $\delta_x\in\calmx$ be the Dirac measure supported on $\{x\}$.
The weak$^*$-topology on $\calmx$ is the weakest topology making each of the
evaluation maps $\mu\mapsto \int f\dd \mu$ continuous for any $f\in C_b(X)$.

For $\mu,\nu\in\calmx$, let 
\[
\gamma(\mu,\nu)=
\sup\biggl\{\biggl|\int_X f\dd \mu-\int_X f\dd \nu\biggr|\colon f\in C_b(X) \text{ with } p_L(f)  \leq 1\biggr\}.
\]
We say that a measure $\pi\in\calm(X\times X)$
is a coupling of $\mu$ and $\nu$ if $\pi(A\times X)=\mu(A)$ and $\pi(X\times B)=\nu(B)$ for all Borel subsets $A,B$ of $X$.
Denote by $\Pi(\mu,\nu)$ the collection of all couplings of $\mu$ and $\nu$.
The Wasserstein metric, or Kantorovich-Rubinshtein metric, on $\calmx$ is defined by
\[
W_1(\mu,\nu)=\inf_{\pi\in \Pi(\mu,\nu)}\int_{X\times X}d(x,y)\dd \pi(x,y)
\]
for  $\mu,\nu\in\calmx$.
By the well-known Kantorovich–Rubinshtein theorem, we have 
$W_1(\mu,\nu)=\gamma(\mu,\nu)$ for all $\mu,\nu \in\calmx$ (see e.g.\ \cite{Gar2018}*{Corollary 21.2.3} or \cite{Dud2002}*{Theorem 11.8.2}).
By the boundness of $d$, the metrics $W_1$ and $\gamma$ are complete and compatible with the 
weak$^*$-topology on $\calmx$ (see e.g.\ \cite{Gar2018}*{Corollary 21.2.4}). 
Even though the metrics $\gamma$ and $W_1$ are equal,  the metric $\gamma$ is more easy to handle for our purpose in this paper. So we will use the metric $\gamma$ instead of $W_1$ in the rest of this article.

The following result must be folklore, e.g. see the example given in \cite{Gar2018}*{pp. 303--304}. 
A slight weak version appears in \cite{CKLP2022}*{Lemma 3.3} and \cite{XZ2021}*{Theorem A.1}.
Since the proof is almost the same, we leave it to the reader.
\begin{lem} \label{lem:finite-seq-metric}
Let $X$ be a Polish space and $d$ be a compatible bounded complete metric on $X$. Then
for any $n\in\mathbb{N}$, $x_0,x_1,\dotsc,x_{n-1}$, $y_0,y_1,\dotsc,y_{n-1}\in X$,
\[  
\min_{\sigma\in S_n}
\frac{1}{n}\sum_{i=0}^{n-1}  d(x_i,y_{\sigma(i)})
=\gamma\biggl(\frac{1}{n}\sum_{i=0}^{n-1}  \delta_{x_i},
\frac{1}{n}\sum_{i=0}^{n-1} \delta_{y_i}\biggr),
\]
where $ S_n $ is the permutation group of $ \{0,1,\dotsb,n-1\} $.
\end{lem}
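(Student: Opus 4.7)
The plan is to identify both sides of the claimed equality with the optimal transport cost between the two empirical measures
\[
\mu=\frac{1}{n}\sum_{i=0}^{n-1}\delta_{x_i},\qquad \nu=\frac{1}{n}\sum_{i=0}^{n-1}\delta_{y_i},
\]
and then reduce the optimization over couplings to an optimization over permutations by means of the Birkhoff--von Neumann decomposition of doubly stochastic matrices. Since by Kantorovich--Rubinshtein $\gamma(\mu,\nu)=W_1(\mu,\nu)=\inf_{\pi\in\Pi(\mu,\nu)}\int d\,\du\pi$, it suffices to prove
\[
\min_{\sigma\in S_n}\frac{1}{n}\sum_{i=0}^{n-1}d(x_i,y_{\sigma(i)})=\inf_{\pi\in\Pi(\mu,\nu)}\int_{X\times X} d(x,y)\,\du\pi(x,y).
\]

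The easy inequality ($\le$ for $W_1$, $\ge$ for the minimum) is to note that for every $\sigma\in S_n$ the probability measure $\pi_\sigma=\tfrac{1}{n}\sum_{i=0}^{n-1}\delta_{(x_i,y_{\sigma(i)})}$ belongs to $\Pi(\mu,\nu)$, and its transport cost is precisely $\tfrac{1}{n}\sum_{i=0}^{n-1}d(x_i,y_{\sigma(i)})$. So $W_1(\mu,\nu)$ is at most the minimum on the left-hand side.

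For the reverse inequality, I take an arbitrary $\pi\in\Pi(\mu,\nu)$ and build from it an $n\times n$ matrix $Q=(q_{ij})$ whose scaling $nQ$ is doubly stochastic. The construction is to look at the masses $M_{kl}=\pi(\{a_k\}\times\{b_l\})$, where $\{a_k\}$ and $\{b_l\}$ enumerate the supports of $\mu$ and $\nu$ with multiplicities $m_k=\#\{i:x_i=a_k\}$ and $n_l=\#\{j:y_j=b_l\}$, and then to disaggregate each $M_{kl}$ uniformly among the indices it corresponds to, setting $q_{ij}=M_{kl}/(m_k n_l)$ when $x_i=a_k$ and $y_j=b_l$. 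A direct check shows both row and column sums of $Q$ equal $1/n$. The Birkhoff--von Neumann theorem then writes $nQ=\sum_{\sigma\in S_n}\lambda_\sigma P_\sigma$ with $\lambda_\sigma\ge 0$, $\sum\lambda_\sigma=1$, and $P_\sigma$ the permutation matrix of $\sigma$. Plugging this into the cost yields
\[
\int d\,\du\pi=\sum_{i,j}q_{ij}\,d(x_i,y_j)=\frac{1}{n}\sum_{\sigma\in S_n}\lambda_\sigma\sum_{i=0}^{n-1}d(x_i,y_{\sigma(i)})\ge \min_{\sigma\in S_n}\frac{1}{n}\sum_{i=0}^{n-1}d(x_i,y_{\sigma(i)}),
\]
which closes the loop.

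The only genuinely delicate point is handling repetitions in the sequences $(x_i)$ and $(y_i)$, because then the correspondence between couplings and doubly stochastic matrices is no longer one-to-one. The uniform disaggregation step above is what resolves this: it provides a canonical lift of any coupling to an $n\times n$ doubly stochastic object on the index set, so that Birkhoff applies cleanly and the cost formula is preserved under the lift. Everything else is a routine check, and no ingredient beyond the Kantorovich--Rubinshtein identification (already invoked in the paper) and the Birkhoff--von Neumann theorem is needed.
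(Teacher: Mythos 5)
Your proof is correct and follows the standard route that the paper itself points to (the paper does not write out a proof, instead citing Garling's example on pp.~303--304 and prior lemmas): identify $\gamma$ with $W_1$ via Kantorovich--Rubinshtein, note that each permutation $\sigma$ yields a coupling $\pi_\sigma$, and reduce an arbitrary coupling to a convex combination of permutations via the Birkhoff--von Neumann decomposition of a doubly stochastic matrix. Your explicit uniform disaggregation step to handle repeated points among the $x_i$'s or $y_j$'s is a genuine care point that is often elided in such references, and your verification that it preserves both the marginal constraints and the transport cost is what makes the Birkhoff step apply cleanly.
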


\subsection{Topological dynamics}
Let $X$ be a Polish space and $T\colon X\to X$ be a continuous map.
The pair $ (X,T) $ is called a Polish dynamical system. 

For a point $ x\in X$, the orbit of $x$ is the set $\{T^nx\colon n\geq 0\}$ 
and the $\omega$-limt set of $x$, denoted by $\omega_f(x)$, is the collection of all the limit points of subsequences of $\{T^nx\}_{n=0}^\infty$.
We say that $(X,T)$ is transitive if there exists some $x\in X$ with $\omega_f(x)=X$,
and weakly mixing if the product system $(X\times X,T\times T)$ is transitive.

A Borel probability measure $\mu\in\calmx$ is called $T$-invariant if 
$\mu(T^{-1}(B))=\mu(B)$ for all Borel subset $B$ of $X$.
Denote by $\calm_T(X)$ the set of all the $T$-invariant Borel probability measures. 
It is clear that $\calm_T(X)$ is a closed convex subset of $\calmx$.
If $X$ is compact, by the Kryloff-Bogoliouboff theorem $\calm_T(X)$ is not empty, see e.g.\@ \cite{DGS1976}.
But if $X$ is only Polish, $\calm_T(X)$ can be empty. A simple example is that $T\colon \bbr\to\bbr$, $x\mapsto x+1$.
It is shown in \cite{Oxt1952}*{(7.2)} that for a Polish dynamical system $(X,T)$, $\calm_T(X)$ is not empty if and only if there exists a point $x\in X$ and a compact subset $A$ of $X$ such that $\{n\in\bbn\colon T^nx\in A\}$ has positive upper density.

For any $x\in X$ and $n\in\bbn$, the $n$th-empirical measure of $x$ is defined as \[
m_T(x,n)=\frac{1}{n}\sum_{i=0}^{n-1}\delta_{T^ix}.
\] 
According to  Lemma~\ref{lem:finite-seq-metric}, for every $n\in\bbn$ and $x,y\in X$, one has 
\[
\min_{\sigma\in S_n}\frac{1}{n} \sum_{i=0}^{n-1}d(T^ix,T^{\sigma(i)}y)
=\gamma(m_T(x,n),m_T(y,n)).
\]
If $\mu\in\calmx$ is a limit of some subsequence of $\{m_T(x,n)\}_{n=1}^\infty$,
then $\mu$ is $T$-invariant.
In particular, if the sequence $\{m_T(x,n)\}_{n=1}^\infty$ converges to $\mu$,
then we say that $x$ is a generic point for $\mu$. 

A point $x\in X$ is called quasi-regular if it is generic for some invariant measure. In this case, denote by $\mu_x$ the unique measure for which $x$ is generic. Let $Q_T(X)$ be the collection of all quasi-regular points in $X$.

A $T$-invariant measure $\mu\in\calm_T(X)$ is called ergodic 
if for every Borel subset $B$ of $X$, $T^{-1}B= B$ implies that $\mu(B)=0$ or $1$.
Denote by  $\calm_T^e(X) $  the set of all the ergodic $T$-invariant measures.
A point $x\in X$ is called regular if it is generic for some ergodic measure.
By the Birkhoff ergodic theorem, every ergodic measure has some generic points.
The ergodic decomposition also holds for Polish dynamical system (see e.g. \cite{Oxt1952}*{Section 8}.
Therefore, $\calm_T(X)$ is not empty if and only if so is $\calm_T^e(X)$ if and only if so is $Q_T(X)$.

We say that a point $ x\in X $ is a periodic point with period $ p $, if the positive integer $ p $ is the least positive integer such that $ T^px=x $. 
Denote the set of all the periodic points by $ \per(T) $. 
If $x$ is a periodic point with period $p$,
then $m_T(x,p)$ is an ergodic $T$-invariant measure and $m_T(x,p)=m_T(x,kp)$ for all $k\in\bbn$.

\section{The mean orbital pseudo-metric}

Let $(X,T)$ be a Polish dynamical system.
We fix  a compatible bounded complete metric $d$ on $X$.
The mean orbital pseudo-metric $ \overline{E} $ is defined by
\[
\overline{E}(x,y)=\limsup_{n\to\infty}\min_{\sigma\in S_n }\frac{1}{n}\sum_{i=0}^{n-1}d(T^ix,T^{\sigma(i)}y )
\]
with $ x $ and $ y\in X $.
The following lemma reveals the relationship between the mean orbital psedudo-metric $\overline{E}$ and the metric $\gamma$ defined on the measure space.
\begin{lem}\label{lem:MOPE-gamma}
Let $(X,T)$ be a Polish dynamical system.
Then for every $x,y\in X$,
\[
\overline{E}(x,y)=\limsup_{n\to\infty} \gamma(m_T(x,n),m_T(y,n)).
\]
Furthermore, if $x,y$ are quasi-regular points, then $\overline{E}(x,y)=\gamma(\mu_x,\mu_y)$.
\end{lem}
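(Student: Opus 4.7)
The first identity should be an immediate consequence of Lemma~\ref{lem:finite-seq-metric}. For fixed $n\in\bbn$ and $x,y\in X$, I would apply that lemma with the finite sequences $x_i=T^ix$ and $y_i=T^iy$ for $0\le i\le n-1$, which gives
\[
\min_{\sigma\in S_n}\frac{1}{n}\sum_{i=0}^{n-1}d(T^ix,T^{\sigma(i)}y)
=\gamma\bigl(m_T(x,n),m_T(y,n)\bigr),
\]
since $m_T(x,n)=\frac{1}{n}\sum_{i=0}^{n-1}\delta_{T^ix}$ by definition, and similarly for $y$. Taking $\limsup$ as $n\to\infty$ on both sides then yields the first formula directly from the definition of $\overline{E}$.

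For the second statement, I would use that, by hypothesis, $x$ and $y$ are quasi-regular, so $m_T(x,n)\to\mu_x$ and $m_T(y,n)\to\mu_y$ in the weak$^*$-topology on $\calmx$ as $n\to\infty$. Because the metric $\gamma$ is complete and compatible with the weak$^*$-topology on $\calmx$ (recorded in the preliminaries via the Kantorovich--Rubinshtein theorem and the boundedness of $d$), the function $\gamma\colon \calmx\times\calmx\to\bbr$ is jointly continuous. Hence
\[
\lim_{n\to\infty}\gamma\bigl(m_T(x,n),m_T(y,n)\bigr)=\gamma(\mu_x,\mu_y),
\]
so the $\limsup$ in the first part is actually a limit and equals $\gamma(\mu_x,\mu_y)$. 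Combined with the first formula, this gives $\overline{E}(x,y)=\gamma(\mu_x,\mu_y)$.

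There is essentially no obstacle here; the content is packaged entirely in Lemma~\ref{lem:finite-seq-metric} and in the fact that $\gamma$ metrizes weak$^*$-convergence. The only point that requires a small amount of care is to ensure one uses joint (and not merely separate) continuity of $\gamma$ when passing the limit through both arguments simultaneously, which however follows at once from the fact that $\gamma$ is a metric compatible with the weak$^*$-topology.
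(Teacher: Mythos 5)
Your proof is correct and follows essentially the same route as the paper: the first identity is a direct application of Lemma~\ref{lem:finite-seq-metric} term-by-term followed by taking $\limsup$, and the second uses weak$^*$-convergence of the empirical measures of quasi-regular points together with the continuity of the metric $\gamma$. Your remark about joint continuity is a reasonable precaution, though for a metric $\gamma$ this is automatic from the triangle inequality, so there is nothing subtle being glossed over.
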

\begin{proof}
The first result follows from Lemma~\ref{lem:finite-seq-metric} directly.
If $x,y$ are quasi-regular points, then $\lim_{n\to\infty} m_T(x,n)=\mu_x$ and $\lim_{n\to\infty} m_T(y,n)=\mu_y$. So the second result follows from the continuity of metric $\gamma$.
\end{proof}

\begin{prop}
Let $(X,T)$ be a Polish dynamical system. Then the collection $Q_T(X)$ of quasi-regular points is closed with respect to the mean orbital pseudo-metric $ \overline{E} $.
\end{prop}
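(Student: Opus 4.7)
The plan is to show that if $\{x_k\}$ is a sequence in $Q_T(X)$ with $\bar E(x_k,x)\to 0$ for some $x\in X$, then $x\in Q_T(X)$. The strategy is first to produce a candidate invariant measure $\mu$ as the $\gamma$-limit of $\{\mu_{x_k}\}$, and then to show via a three-term triangle inequality that $m_T(x,n)\to\mu$, so $x$ is generic for $\mu$.

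First I would observe, using Lemma~\ref{lem:MOPE-gamma} together with the triangle inequality for the pseudo-metric $\bar E$, that
\[
\gamma(\mu_{x_k},\mu_{x_j})=\bar E(x_k,x_j)\leq \bar E(x_k,x)+\bar E(x,x_j),
\]
so $\{\mu_{x_k}\}$ is Cauchy in $(\calmx,\gamma)$. Since $\gamma$ is a complete metric and $\calm_T(X)$ is a closed subset of $\calmx$, there exists $\mu\in\calm_T(X)$ with $\mu_{x_k}\to \mu$ in the weak$^*$-topology.

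The heart of the proof is showing $m_T(x,n)\to\mu$ as $n\to\infty$. Fix $\eps>0$. Choose $k$ large enough that $\bar E(x_k,x)<\eps/3$ and $\gamma(\mu_{x_k},\mu)<\eps/3$. By the definition of $\bar E$ and Lemma~\ref{lem:MOPE-gamma}, the strict inequality
$\limsup_{n\to\infty}\gamma(m_T(x_k,n),m_T(x,n))<\eps/3$
gives an $N_1$ with $\gamma(m_T(x_k,n),m_T(x,n))<\eps/3$ for all $n\geq N_1$. Since $x_k\in Q_T(X)$, there is also an $N_2$ with $\gamma(m_T(x_k,n),\mu_{x_k})<\eps/3$ for all $n\geq N_2$. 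Combining these via the triangle inequality yields
\[
\gamma(m_T(x,n),\mu)\leq \gamma(m_T(x,n),m_T(x_k,n))+\gamma(m_T(x_k,n),\mu_{x_k})+\gamma(\mu_{x_k},\mu)<\eps
\]
for all $n\geq \max(N_1,N_2)$, proving $m_T(x,n)\to\mu$ and hence $x\in Q_T(X)$.

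The only subtle point, and arguably the main obstacle, is correctly handling the quantifiers coming out of the $\limsup$ in the definition of $\bar E$: one must pick $k$ \emph{first} (so that $\bar E(x_k,x)$ is strictly below $\eps/3$), and only \emph{then} choose $N_1$ depending on this $k$, to convert the $\limsup$-bound into a bound valid for all sufficiently large $n$. Once this is set up, the argument is a standard three-term triangle inequality together with the completeness of $\gamma$ and the closedness of $\calm_T(X)$ in $\calmx$.
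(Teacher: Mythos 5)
Your proof is correct, and it uses the same essential ingredients as the paper's (Lemma~\ref{lem:MOPE-gamma}, the triangle inequality for $\gamma$, completeness of $(\calmx,\gamma)$, and the careful quantifier order of choosing $k$ before $N_1$), but it is organized differently. The paper shows directly that $\{m_T(x,n)\}_{n\geq 1}$ is a Cauchy sequence in $(\calmx,\gamma)$ --- by the same three-term estimate applied to $\gamma(m_T(x,i),m_T(x,j))$ --- and then concludes that it converges by completeness, without ever naming the limit. You instead first extract the candidate limit $\mu$ as the $\gamma$-limit of the Cauchy sequence $\{\mu_{x_k}\}$ (using closedness of $\calm_T(X)$), and then run a second three-term estimate to show $m_T(x,n)\to\mu$. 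Your route is slightly longer since it contains two Cauchy/limit arguments instead of one, but it has the minor advantage of producing the explicit identification $\mu_x=\lim_k\mu_{x_k}$, which the paper's proof leaves implicit. Both are valid; the paper's version is a touch more economical.
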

\begin{proof}
Let $\{x_k\}$ be a sequence in $Q_T(X)$ and $x\in X$ with $\lim_{k\to\infty} \overline{E}(x_k,x)=0$.
Fix $\eps$ to be an arbitrary, positive real number.
There exists $k\in\bbn$ such that 
$\overline{E}(x_k,x)<\frac{\eps}{3}$.
By Lemma~\ref{lem:MOPE-gamma}, $\limsup_{n\to\infty} \gamma(m_T(x_k,n),m_T(x,n))<\frac{\eps}{3}$.
There exists $N_1\in\bbn$ such that for any $n>N_1$,
$\gamma(m_T(x_k,n),m_T(x,n))<\frac{\eps}{3}$.
As $x_k$ is a quasi-regular point, it implies that $\lim_{n\to\infty} m_T(x_k,n)=\mu_{x_k}$.
There exists $N_2\in\bbn$ such that for any $i,j>N_2$,
$\gamma(m_T(x_k,i),m_T(x_k,j))<\frac{\eps}{3}$.
Then for any $i,j>\max\{N_1,N_2\}$, 
\begin{align*}
\gamma(m_T(x,i),m_T(x,j))&\leq  \gamma(m_T(x,i),m_T(x_k,i))+
\gamma(m_T(x_k,i),m_T(x_k,j))\\
&\qquad\qquad  +\gamma(m_T(x_k,j),m_T(x,j))\\ &<\frac{\eps}{3}+
\frac{\eps}{3}+\frac{\eps}{3}=\eps.
\end{align*}
This shows that $\{m_T(x,n)\}_{n=1}^\infty$ is a Cauchy sequence.
As $(\calmx,\gamma)$ is complete, $\{m_T(x,n)\}_{n=1}^\infty$ is convergent, and therefore 
$x$ is a quasi-regular point.
\end{proof}

\begin{remark}
Let $(X,T)$ be a Polish dynamical system and $ A $, $ B$ be two subsets of the collection $ Q_T(X) $ of quasi-regular points . 
By Lemma~\ref{lem:MOPE-gamma}, we have that $ \{\mu_x\colon x\in A \} $ is dense in $ \{\mu_y\colon y\in B \} $  in the space $\calm_T(X)$ if and only if  $ A $ is dense in $ B $ with respect to the mean orbital pseudo-metric $ \overline{E} $, that is, for every $y\in B$ and $\eps>0$ there exists $x\in A$ with $\overline{E}(x,y)<\eps$.

If $A$ consists of periodic points, then we have the following result which is easy to handle in some sense.
\end{remark}

\begin{proof}[Proof of Theorem~\ref{thm:main-result1}]
 We divide the proof of Theorem~\ref{thm:main-result1} into two parts. The  part (1) of Theorem~\ref{thm:main-result1} follows from  Theorem~\ref{thm:per-dense-B}. And the part (2) of Theorem~\ref{thm:main-result1} can be found in Theorem~\ref{thm:per-dense-conv-B}.
\end{proof}

\begin{thm}\label{thm:per-dense-B}
Let $ (X,T) $ be a Polish dynamical system,
$A\subset \per(T)$ and $B\subset Q_T(X) $.
Then $ \{\mu_x\colon x\in A \} $ is dense in $ \{\mu_y\colon y\in B \} $  in the space $\calm_T(X)$ if and only if
for every $y\in B$, $\eps>0$ and $N\in\bbn$ there exist $x\in A$
and $n\in\bbn$ such that $n\geq N$, $T^n x= x$ and 
\[
\min_{\sigma\in S_{n} }\frac{1}{n}\sum_{i=0}^{n-1}d(T^ix,T^{\sigma(i)}y )<\eps.
\]
\end{thm}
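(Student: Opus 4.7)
The plan is to translate everything into statements about the metric $\gamma$ on $\calm_T(X)$ via Lemmas~\ref{lem:finite-seq-metric} and \ref{lem:MOPE-gamma}, and then exploit the fact that empirical measures of periodic points stabilize. The key identity I will use throughout is that for periodic $x\in\per(T)$ with least period $p$, one has $m_T(x,n)=\mu_x$ whenever $p\mid n$, and in particular whenever $T^nx=x$. Combined with Lemma~\ref{lem:finite-seq-metric}, the quantity $\min_{\sigma\in S_n}\frac{1}{n}\sum_{i=0}^{n-1}d(T^ix,T^{\sigma(i)}y)$ equals $\gamma(\mu_x,m_T(y,n))$ whenever $T^nx=x$.

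For the forward direction, assume $\{\mu_x\colon x\in A\}$ is dense in $\{\mu_y\colon y\in B\}$ and fix $y\in B$, $\eps>0$, $N\in\bbn$. Pick $x\in A$ with $\gamma(\mu_x,\mu_y)<\eps/2$; let $p$ be its period. Because $y$ is quasi-regular, $m_T(y,n)\to \mu_y$, so there exists $n\geq N$ divisible by $p$ with $\gamma(m_T(y,n),\mu_y)<\eps/2$. Then $T^nx=x$ gives $m_T(x,n)=\mu_x$, and by Lemma~\ref{lem:finite-seq-metric}
\[
\min_{\sigma\in S_n}\frac{1}{n}\sum_{i=0}^{n-1}d(T^ix,T^{\sigma(i)}y)=\gamma(\mu_x,m_T(y,n))\leq \gamma(\mu_x,\mu_y)+\gamma(\mu_y,m_T(y,n))<\eps.
\]

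For the converse, fix $y\in B$ and $\eps>0$; I want some $x\in A$ with $\gamma(\mu_x,\mu_y)<\eps$. Since $m_T(y,n)\to\mu_y$, choose $N$ so that $\gamma(m_T(y,n),\mu_y)<\eps/2$ for all $n\geq N$. Apply the hypothesis with this $y$, $\eps/2$, and $N$ to obtain $x\in A$ and $n\geq N$ with $T^nx=x$ and, again by Lemma~\ref{lem:finite-seq-metric}, $\gamma(m_T(x,n),m_T(y,n))<\eps/2$. Since $T^nx=x$ forces $m_T(x,n)=\mu_x$, the triangle inequality yields $\gamma(\mu_x,\mu_y)<\eps$, as required.

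The argument is essentially a packaging exercise once the identification provided by Lemma~\ref{lem:finite-seq-metric} is in hand, so there is no serious obstacle; the only point that deserves care is noting that $T^nx=x$ implies the least period of $x$ divides $n$, which is what allows me to replace $m_T(x,n)$ by $\mu_x$ exactly (rather than merely approximately) in the chain of triangle inequalities. This makes both halves of the biconditional symmetric and lets me trade $\eps$ cleanly between the two error terms $\gamma(\mu_x,\mu_y)$ and $\gamma(m_T(y,n),\mu_y)$.
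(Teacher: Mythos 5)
Your proof is correct and follows essentially the same route as the paper: both directions convert the permutation-minimum to $\gamma$ of empirical measures via Lemma~\ref{lem:finite-seq-metric}, exploit that $T^nx=x$ gives $m_T(x,n)=\mu_x$, use quasi-regularity of $y$ to make $m_T(y,n)$ close to $\mu_y$ for large $n$ (choosing $n$ a multiple of the period in the forward direction), and finish by the triangle inequality with an $\eps/2$ split. The only cosmetic difference is that you make explicit the step $\gamma(m_T(x,n),m_T(y,n))=\gamma(\mu_x,m_T(y,n))$, which the paper uses implicitly.
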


\begin{proof}
We first prove the necessity. 
Fix $y\in B$, $\eps>0$ and $N\in\bbn$.
There exists $x\in A$ such that $\gamma(\mu_x,\mu_y)<\frac{\eps}{2}$.
Denote by $p$ the period of $x$. Then $\mu_x=m_T(x,p\cdot i)$ for all $i\in\bbn$.
As $y$ is quasi-regular, $\lim_{n\to\infty} m_T(y,n)=\mu_y$.
Pick $n\in\bbn$ such that $n\geq N$ and $p|n$ and $\gamma(m_T(y,n),\mu_y)<\frac{\eps}{2}$.
By Lemma~\ref{lem:finite-seq-metric}, one has 
\begin{align*}
\min_{\sigma\in S_{n} }\frac{1}{k}\sum_{i=0}^{n-1}d(T^ix,T^{\sigma(i)}y )
&= \gamma(m_T(x,n),m_T(y,n)) \\
&< \gamma(\mu_x,\mu_y)+\gamma(\mu_y,m_T(y,n))\\
&<\frac{\eps}{2}+\frac{\eps}{2}=\eps.
\end{align*}

Now we prove the sufficiency.
Fix $y\in B$ and $\eps>0$.
As $y$ is quasi-regular,
there exists $N\in\bbn$ such that $\gamma(m_T(y,n),\mu_y)<\frac{\eps}{2}$ for all $n\geq N$.
By the assumption, for this $N$
there exists $x\in A$ and $n\in\bbn$ such that $n>N$, $T^n x =x$ and 
\[
\min_{\sigma\in S_{n} }\frac{1}{n}\sum_{i=0}^{n-1}d(T^ix,T^{\sigma(i)}y )<
\frac{\eps}{2}.
\]
By Lemma~\ref{lem:finite-seq-metric}, one has 
\begin{align*}
\gamma(\mu_x,\mu_y)&\leq 
\gamma(m_T(x,n),m_T(y,n))+\gamma(m_T(y,n),\mu_y)\\
&< 
\min_{\sigma\in S_{n} }\frac{1}{n}\sum_{i=0}^{n-1}d(T^ix,T^{\sigma(i)}y )
+\frac{\eps}{2} 
\\
&<\frac{\eps}{2}+\frac{\eps}{2}=\eps.
\end{align*}
This ends the proof.
\end{proof}

In \cite{GK2018}, Kelfert and  Kwietniak introduced the notion of closability.
Let $ (X,T) $ be a Polish dynamical system and $K\subset \per(T)$.
A point $ x\in X $ is called $ K $-closable 
if for any $ \eps >0 $ and $ N\in\bbn $, there exist two  positive integers $p=p(x,\eps,N)  $ and $ q=q(x,\varepsilon,N) $ with $ N\leq p\leq q\leq (1+\eps)p $ such that there is some $ y\in K $ satisfying $ T^qy=y $ and $ d(T^iy,T^ix)<\varepsilon $ for $ 0\leq i\leq p-1 $. 
We say that $ (X,T) $ is $ K $-closable if for every $ \mu\in \mathcal{M}_T^e(X) $, there is some generic point for $ \mu $ that is $ K $-closable.  

\begin{lem}\label{close-gamma}
Let $ (X,T) $ be a Polish dynamical system and $K\subset \per(T)$.
If a point $x\in X$ is $K$-closable, 
then for every $\eps>0$ and $N\in\bbn$,
	there exists a periodic point $y\in K$ with period $q$ such that $q\geq N$ and  
\[
\frac{1}{q}\sum_{i=0}^{q-1}d(T^ix,T^{i}y )<\eps.
\]
\end{lem}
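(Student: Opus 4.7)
The plan is to unpack the definition of $K$-closability and estimate the Birkhoff-type average $\frac{1}{q}\sum_{i=0}^{q-1} d(T^ix,T^iy)$ by splitting it at index $p$: on indices $0,\dots,p-1$ the closability gives us uniform smallness of $d(T^ix,T^iy)$, while on the leftover block $p,\dots,q-1$ we only control the length because $d$ is bounded.

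More concretely, fix $\eps>0$ and $N\in\bbn$. Let $M\ge 1$ be an upper bound for the chosen bounded metric $d$ on $X$ (such $M$ exists by the blanket assumption at the start of Section~3). Set $\eps' := \eps/(1+M)$. Since $x$ is $K$-closable, applying the definition with parameters $\eps'$ and $N$ yields integers $p\le q\le(1+\eps')p$ with $p\ge N$ and a point $y\in K$ with $T^qy=y$ and $d(T^iy,T^ix)<\eps'$ for $0\le i\le p-1$. In particular $q\ge N$, so the period requirement in the lemma is satisfied.

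Now I would split the sum as
\[
\frac{1}{q}\sum_{i=0}^{q-1} d(T^ix,T^iy) \;=\; \frac{1}{q}\sum_{i=0}^{p-1} d(T^ix,T^iy) \;+\; \frac{1}{q}\sum_{i=p}^{q-1} d(T^ix,T^iy).
\]
The first sum is bounded by $p\eps'/q\le \eps'$. For the second, each term is at most $M$, giving a contribution of at most $M(q-p)/q$; the relation $q\le(1+\eps')p$ implies $q-p\le \eps'p\le \eps'q$, so this block contributes at most $M\eps'$. Adding the two estimates gives $(1+M)\eps'=\eps$, which is exactly the desired bound.

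There is no real obstacle here — the only subtlety is choosing $\eps'$ small enough up front so that the uncontrolled tail block $[p,q-1]$, whose relative length is bounded by $\eps'$ thanks to the $q\le(1+\eps')p$ clause in the definition of closability, still fits under $\eps$ after being weighted by $M$. Strictly, one should also take $\eps'<\eps$ to keep the first block under control, but the choice $\eps'=\eps/(1+M)$ already ensures this since $M\ge 0$.
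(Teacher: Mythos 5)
Your proof is correct and follows the same approach as the paper's: split the Birkhoff sum at index $p$, bound the head block by the pointwise smallness from closability, bound the tail block by the diameter times the relative length $(q-p)/q \le \eps'$, and balance the two by a suitable choice of $\eps'$. The only cosmetic difference is that the paper normalizes the diameter to $1$ and takes $\eps'=\eps/2$, whereas you keep a general bound $M$ and take $\eps'=\eps/(1+M)$; these reduce to the same thing.
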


\begin{proof}
As $d$ is a bounded metric on $X$, without loss of generality assume that the 
diameter of $X$ is $1$.
Fix $\eps>0$ and $N\in\bbn$.
By the assumption that $x$ is $K$-closable,
there exist two  positive integers $p $ and $ q $ with $ N\leq p\leq q\leq (1+\eps/2)p $ such that there is some $ y\in K $ satisfying $ T^qy=y $ and $ d(T^ix,T^iy)<\eps/2 $ for $ 0\leq i\leq p-1 $. 
Then 
\begin{align*}
\frac{1}{q}\sum_{i=0}^{q-1}d(T^ix,T^{i}y ) 
&\leq \frac{1}{q}\Bigl(\sum_{i=0}^{p-1}d(T^ix,T^iy) +q-p\Bigr) \\
& <\frac{1}{q}\Bigl( \frac{\eps}{2} p +q-p\Bigr)\\
& \leq\frac{1}{q}\Bigl( \frac{\eps}{2} p +\frac{\eps}{2} p\Bigr)\leq \eps. \qedhere 
\end{align*}
\end{proof}

Combining Theorem~\ref{thm:per-dense-B} and Lemma~\ref{close-gamma},
we have the following consequence.

\begin{cor}[\cite{GK2018}*{Theorem 4.11}]
Let $ (X,T) $ be a Polish dynamical system and $K$ be a subset of $\per(T)$.
If $(X,T)$ is $K$-closable,
then $\{\mu_x\colon x\in K\}$ is dense in $\calm^e_T(X)$.
\end{cor}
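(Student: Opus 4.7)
The plan is to apply Theorem~\ref{thm:per-dense-B} with $A = K$ and with $B$ equal to the set of those $K$-closable points in $X$ which are generic for some ergodic measure. Two things need to be checked: that this $B$ captures all of $\calm^e_T(X)$ on the measure side, and that the displayed inequality required by Theorem~\ref{thm:per-dense-B} is satisfied for every $y \in B$.

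For the first point, fix $\mu \in \calm^e_T(X)$. By the very definition of $(X,T)$ being $K$-closable, some generic point $y$ for $\mu$ is itself $K$-closable, so $y \in B$ with $\mu_y = \mu$. Hence $\{\mu_y \colon y \in B\} = \calm^e_T(X)$, and clearly $B \subset Q_T(X)$, so the standing hypotheses of Theorem~\ref{thm:per-dense-B} on $B$ hold.

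For the second point, fix $y \in B$, $\eps > 0$ and $N \in \bbn$. Lemma~\ref{close-gamma} applied to the $K$-closable point $y$ produces a periodic point $x \in K$ with period $q \geq N$ (in particular $T^q x = x$) satisfying
\[
\frac{1}{q}\sum_{i=0}^{q-1} d(T^i x, T^{i} y) < \eps.
\]
Taking $\sigma$ to be the identity permutation in $S_q$ gives the a fortiori estimate
\[
\min_{\sigma \in S_q} \frac{1}{q} \sum_{i=0}^{q-1} d(T^i x, T^{\sigma(i)} y) \leq \frac{1}{q}\sum_{i=0}^{q-1} d(T^i x, T^{i} y) < \eps,
\]
which is exactly what Theorem~\ref{thm:per-dense-B} demands. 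Invoking that theorem yields that $\{\mu_x \colon x \in K\}$ is dense in $\{\mu_y \colon y \in B\} = \calm^e_T(X)$, which is the conclusion.

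There is no serious obstacle: the corollary is built to be an immediate composition of the converter Lemma~\ref{close-gamma}, which translates the orbit-matching property in the definition of $K$-closability into an averaged distance bound along a full period, with the density criterion in Theorem~\ref{thm:per-dense-B}. The only small subtlety is ensuring that the period $q$ may be taken arbitrarily large, which is guaranteed because the definition of $K$-closability allows an arbitrary starting threshold $N$.
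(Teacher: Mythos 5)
Your proof is correct and is exactly the intended argument: the paper's own ``proof'' is nothing more than the one-line remark that the corollary follows by combining Theorem~\ref{thm:per-dense-B} with Lemma~\ref{close-gamma}, and you have spelled out that combination correctly, including the observation that the identity permutation makes the unordered average bound from Lemma~\ref{close-gamma} stronger than the minimum-over-permutations bound that Theorem~\ref{thm:per-dense-B} requires. The only cosmetic nit is the phrase ``with period $q \geq N$'': the $K$-closability definition (and hence Lemma~\ref{close-gamma}) only guarantees $T^q x = x$, not that $q$ is the minimal period, but since you immediately note that $T^q x = x$ is all you need and Theorem~\ref{thm:per-dense-B} likewise only asks for $T^n x = x$, this does not affect the argument.
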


It is shown in \cite{GK2018}*{Propositions 4.8, 4.9 and 4.10, respectively} that every dynamical system 
with the periodic specification property ($\beta$-shift, $S$-gap shift, respectively) is closable 
with respect to the set of periodic points.  
Here we examine continuous interval maps with zero entropy.
We refer the reader to \cite{BC1992} and \cite{Rue2017} for preliminaries of dynamics on interval maps. 

\begin{prop}
Let $f\colon [0,1]\to[0,1]$ be a continuous map.
\begin{enumerate}
    \item  If $f$ has zero topological entropy then $\{\mu_x\colon x\in \per(f)\}$ is dense in $\calm^e_f([0,1])$.
    \item If $f$ is not Li-Yorke chaotic then it is $\per(f)$-closable.
\end{enumerate}

\end{prop}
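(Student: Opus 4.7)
Both parts rest on the classical structure theory of low-entropy interval dynamics. For (1), the plan is to invoke the Sharkovsky--Misiurewicz--Smítal description of minimal sets of a zero-entropy interval map (see \cite{BC1992} or \cite{Rue2017}): every minimal set $M$ of $f$ is either a periodic orbit or a \emph{solenoidal} (adding-machine) Cantor set, and every ergodic $f$-invariant measure is the unique ergodic measure on such a minimal set. If $\mu\in\calm^e_f([0,1])$ is supported on a periodic orbit, then $\mu=\mu_p$ for some $p\in\per(f)$ and we are done. If instead $\mu$ is supported on a solenoid $M$, then $M$ is the intersection of a nested sequence of cycles of periodic intervals $J_n$ of period $2^n$, and each $J_n$ contains a periodic orbit $Q_n\subset\per(f)$ of period $2^n$. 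By the odometer structure, the measures $\mu_{q_n}=\frac{1}{2^n}\sum_{q\in Q_n}\delta_q$ (for any $q_n\in Q_n$) converge in the weak$^*$-topology to the unique $f$-invariant measure on $M$, which is $\mu$. This gives density of $\{\mu_x:x\in\per(f)\}$ in $\calm^e_f([0,1])$.

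For (2), the plan is to combine the zero-entropy structure with the characterization of non-Li-Yorke chaos for interval maps due to Smítal and Janková: since positive topological entropy implies Li-Yorke chaos, $f$ has zero entropy, hence every $\omega$-limit set is either a periodic orbit or a solenoid as above; and in the non-chaotic case every point in the basin of a solenoid $M$ is \emph{asymptotic} (not merely proximal) to an orbit on $M$, with the approach being uniform on compacts in the basin. Given $\mu\in\calm^e_f([0,1])$, I would pick a convenient generic point for $\mu$ and verify closability: in the periodic case, take the periodic point itself, and closability is trivial. In the solenoidal case, pick a generic point $y\in M$ (using Birkhoff's theorem). Given $\eps>0$ and $N\in\bbn$, choose $n$ with $2^n\geq N$ and small enough that the Hausdorff distance between $M$ and $Q_n$ is less than $\eps/2$, and then use the asymptoticity and uniform continuity of $f,\dots,f^{2^n-1}$ on $J_n$ to show that there is $q_n\in Q_n$ with $d(f^iy,f^iq_n)<\eps$ for all $0\leq i\leq 2^n-1$. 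Then $p=q=2^n$ satisfies the closability inequalities $N\leq p\leq q\leq(1+\eps)p$, proving that $y$ (and hence $\mu$) admits a $\per(f)$-closable generic point.

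The main obstacle is the solenoidal case of (2), specifically the uniform-shadowing estimate $d(f^iy,f^iq_n)<\eps$ for the \emph{entire} cycle $0\leq i\leq 2^n-1$. This requires more than the trivial approximation $M\approx Q_n$ up to $\eps/2$: one must control the propagation of error under $f^i$ along the orbit, which is where the non-Li-Yorke-chaotic hypothesis is essential, since it rules out the presence of proximal-but-not-asymptotic pairs and forces the return map of $f^{2^n}$ on $J_n$ to contract each connected component of $J_n$ uniformly onto a neighborhood of $q_n$. Granting this contraction, the uniform estimate follows by partitioning the cycle $\{0,1,\dots,2^n-1\}$ according to the cyclic permutation of the components of $J_n$ by $f$, and estimating $d(f^iy,f^iq_n)$ inside each component using continuity of $f$ and smallness of the component's diameter.
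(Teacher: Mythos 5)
Your overall strategy matches the paper's in both parts: shadow a generic point $y$ of an ergodic measure by periodic points of period $2^n$ inside the associated nested cycles of intervals, and invoke the structure theory of zero-entropy interval maps. The paper implements this with the intervals $I_n^i=\conv\omega_{f^{2^n}}(f^i(y))$ and the Block--Coppel/Ruette lemma producing a periodic point $x_n$ with $f^i(x_n)\in I_n^i$ for $0\le i<2^n$ and $f^{2^n}(x_n)=x_n$; note it only needs $\supp(\mu)$ to be transitive (hence either a periodic orbit or perfect), not minimal, so you can drop the detour through the Sharkovsky--Misiurewicz--Sm\'ital classification.

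There are two places where the quantitative input is missing, and in one of them the patch you propose would not work. For (1), the claim that $\mu_{q_n}\to\mu$ weak$^*$ ``by the odometer structure'' is not a consequence of unique ergodicity of the adding machine: the atoms of $\mu_{q_n}$ lie off $M$, in the intervals $J_n^i$, and one must control how far they are from the corresponding pieces of $M$. What makes it work is that the $2^n$ intervals in the $n$-th cycle are pairwise disjoint subsets of $[0,1]$ (Block--Coppel VI.14 / Ruette Prop.\ 5.24, which is where zero entropy enters), so $\sum_i\diam(I_n^i)\le1$; the paper turns this into a Ces\`aro bound on $\frac{1}{2^n}\sum_{i<2^n}|f^iy-f^ix_n|$ and applies Theorem~\ref{thm:per-dense-B}. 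For (2), deriving $d(f^iy,f^iq_n)<\eps$ for all $i<2^n$ from $d(y,q_n)<\eps/2$ via uniform continuity of $f,\dots,f^{2^n-1}$ cannot succeed, since the modulus of continuity of $f^{2^n-1}$ degrades with $n$ while the Hausdorff distance need not shrink commensurately. The ``contraction of the return map'' you reach for is also not right: $f^{2^n}|_{J_n^0}$ is a surjection of $J_n^0$ onto itself and in general contracts nothing. The fact that is actually needed (Ruette Prop.\ 5.32, the reformulation of the non-Li--Yorke-chaotic hypothesis that the paper cites) is that $\max_{0\le i<2^n}\diam(I_n^i)\to0$ as $n\to\infty$. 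Given this, the uniform estimate is immediate with no error propagation at all: $f^iy$ and $f^ix_n$ both lie in $I_n^i$ by construction, so $|f^iy-f^ix_n|\le\diam(I_n^i)$. Substituting this observation for the uniform-continuity step turns your sketch into the paper's proof.
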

\begin{proof}
Let $\mu$ be an ergodic invariant measure of $f$.
Denote the support of $\mu$ by  $\supp(\mu)$. 
Then $(\supp(\mu),f)$ is transitive. 
In particular, either $\supp(\mu)$ is a periodic orbit or  $\supp(\mu)$ is perfect.
If $\supp(\mu)$ is a periodic orbit of $z$,
then $z$ is generic for $\mu$ and $z$ is $\per(f)$-closable clearly.
Now assume that $\supp(\mu)$ is perfect. 
Pick a generic point $y\in \supp(\mu)$ for $\mu$.
Then $\omega_f(y)=\supp(\mu)$.
For every $n\geq 0$ and $i=0,1,\dotsc,2^n-1$, let 
\[
I_n^i = [\min \omega_{f^{2^n}}(f^i(y)), \max \omega_{f^{2^n}}(f^i(y)) ].
\]
Then 
$\omega_f(y)\subset \bigcup_{j=0}^{2^n-1}I_n^j$ for all $n\geq 0$.
Moreover, $I_n^{i+1\pmod {2^n}}\subset f(I_n^i)$ for all $n\geq 0$ and $i=0,1,\dotsc,2^n-1$.
By \cite{BC1992}*{Lemma I.4} or \cite{Rue2017}*{Lemma 1.13}, for each $n\geq 0$ there exists $x_n\in [0,1]\cap \per(f)$ such that $f^i(x_n)\in I_n^i$ 
for $i=0,1,\dotsc,2^n-1$ and $f^{2^n}(x_n)=x_n$.

(1) As $f$ has zero topological entropy, by \cite{BC1992}*{Lemma VI.14} or \cite{Rue2017}*{Proposition 5.24},     
the collection $\{ I_n^i\}_{i=0}^\infty $ of closed intervals are pairwise disjoint for every $n\geq 0$, then there are at most $n$ intervals with diameter larger than $\frac{1}{n}$.
Without loss of generality, assume that $y\in I_n^0$. Then $f^j(y)\in I_n^{j\pmod {2^n}}$ for all $j\geq 0$. 
For every $k\geq 1$, we have 
\begin{align*}
    \frac{1}{k 2^n} \sum_{i=0}^{k 2^n-1}|f^i(y)-f^i(x_n)|
    \leq  \frac{1}{2^n}\bigl(n+(2^n-n)\tfrac{1}{n}\bigr)\to 0
\end{align*}
as $n\to\infty$.
It implies that for any $y\in \supp(\mu) $ being the generic point for $\mu$ and any $\varepsilon>0$, there exist $2^n\in\mathbb{N}$ and $x=x_n\in\per(f)$ such that 
$f^{2^n}(x)=x $ and 
$\frac{1}{ 2^n} \sum_{i=0}^{ 2^n-1}|f^i(y)-f^i(x_n)|<\varepsilon$.
Now by Theorem~\ref{thm:per-dense-B} $\{\mu_x\colon x\in \per(f)\}$ is dense in $\calm^e_f([0,1])$.
 
(2) As $f$ is not Li-Yorke chaotic, by \cite{BC1992}*{Proposition  VIII.34} or \cite{Rue2017}*{Theorem 5.17} $f$ has zero topological entropy.
Furthermore, by \cite{Rue2017}*{Proposition 5.32},
\[
\lim_{n\to\infty} \max_{0\leq i\leq 2^n-1} 
(\max \omega_{f^{2^n}}(f^i(x))-\min \omega_{f^{2^n}}(f^i(x)))=0.
\]
Again assume that $x\in I_n^0$, then for every $i\geq 0$, 
\[
|f^i(x)-f^i(y_n)|\leq \max_{0\leq i\leq 2^n-1} 
(\max \omega_{f^{2^n}}(f^i(x))-\min \omega_{f^{2^n}}(f^i(x)))
\to 0
\]
as $n\to\infty$.
Hence $x$ is $\per(f)$-closable.
\end{proof}

It is clear that $\calmx$ is a convex set.
For a subset $A$ of $\calmx$, the convex hull of $A$, denoted by $\conv(A)$, is  the smallest convex subset of $\calmx$ containing $A$.
The closure of the convex hull of $A$ is denoted by $\overline{\conv}(A)$.

\begin{remark}\label{rem:conv-hull}
Let $ (X,T) $ be a Polish dynamical system 
and $ A $, $ B$ be two subsets of $ Q_T(X) $.
It is clear that 
\[
\conv_\bbq(\{\mu_y\colon y\in B\}):=\Bigl\{ \sum_{j=1}^{p}\lambda_j\mu_{y_j}\colon y_{j}\in B,        \sum_{j=1}^{p}\lambda_j=1, \lambda_i\geq 0,  \lambda_i\in\bbq,  \text{ and } p\geq 1  \Bigr\} 
\]
is dense in $\overline{\conv}(\{\mu_y\colon y\in B\})$.
To show that  $ \{\mu_x\colon x\in A \}$  is dense in  $ \overline{\conv}
(\{\mu_y\colon y\in B \}) $, it is sufficient to show that $ \{\mu_x\colon x\in A \}$  is dense in $\conv_\bbq(\{\mu_y\colon y\in B\})$.
Moreover, if $A$ is a subset of $B$, then it is easy to check that $ \{\mu_x\colon x\in A \}$  is dense in  $ \overline{\conv}
(\{\mu_y\colon y\in B \}) $ if and only if $ \{\mu_x\colon x\in A \}$  is dense in  
\[
\Bigl\{ \alpha \mu_{y_1} +(1-\alpha)\mu_{y_2} \colon y_1,y_2\in B\text{ and } \alpha\in [0,1]\cap \bbq \Bigr\}.
\]
\end{remark}

The second main result in this section is as follows, which implies
the part (2) of Theorem~\ref{thm:main-result1}.

\begin{thm}\label{thm:per-dense-conv-B}
Let $ (X,T) $ be a Polish dynamical system,
$A\subset \per(T)$ and $ B\subset  Q_T(X) $.
Then  $ \{\mu_x\colon x\in A \}$  is dense in  $ \overline{\conv}
(\{\mu_y\colon y\in B \}) $
if and only if 
for any $\eps>0$, any $ k\geq 1 $,  $y_1,y_2,\dotsc,y_k \in  B$ and $N\in\bbn$
there exists $x\in A$  and $n\in\bbn$ such that $n\geq N$, $T^{kn} x= x$ and   
\[
\min_{\sigma\in S_{kn}}\frac{1}{kn}
\sum_{i=0}^{kn-1}d(z_i,T^{\sigma(i)}x)<\eps,
\] 
where 
$ z_{i+(j-1)n}=T^iy_j $ for $ 0\leq i\leq n-1 $ and $ 1\leq j\leq k $.
\end{thm}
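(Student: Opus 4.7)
The plan is to adapt the proof of Theorem~\ref{thm:per-dense-B} by combining Remark~\ref{rem:conv-hull} (which reduces density in $\overline{\conv}(\{\mu_y:y\in B\})$ to density among rational convex combinations) with the bookkeeping identity
$$
\frac{1}{kn}\sum_{i=0}^{kn-1}\delta_{z_i}=\frac{1}{k}\sum_{j=1}^{k}m_T(y_j,n),
$$
read off directly from the definition $z_{i+(j-1)n}=T^iy_j$. Together with Lemma~\ref{lem:finite-seq-metric} and the fact that $T^{kn}x=x$ forces $m_T(x,kn)=\mu_x$, this rewrites the minimum appearing in the theorem as $\gamma\bigl(\tfrac{1}{k}\sum_{j=1}^{k}m_T(y_j,n),\mu_x\bigr)$. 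I will also use the convexity estimate $\gamma\bigl(\tfrac{1}{k}\sum_j\mu_j,\tfrac{1}{k}\sum_j\nu_j\bigr)\leq\tfrac{1}{k}\sum_j\gamma(\mu_j,\nu_j)$, which is immediate from the sup-over-Lipschitz-functions definition of $\gamma$.

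For necessity, given $\eps>0$, $k\ge 1$, $y_1,\dots,y_k\in B$ and $N\in\bbn$, the barycenter $\mu:=\tfrac{1}{k}\sum_{j=1}^{k}\mu_{y_j}$ lies in $\overline{\conv}(\{\mu_y:y\in B\})$, so the assumed density furnishes $x\in A$ with $\gamma(\mu_x,\mu)<\eps/2$. Letting $p$ be the period of $x$, I choose $n\ge N$ to be a multiple of $p$ large enough that $\gamma(m_T(y_j,n),\mu_{y_j})<\eps/2$ for each $j$; this is possible because $B\subset Q_T(X)$ and the tuple is finite. Then $p\mid kn$ yields $m_T(x,kn)=\mu_x$, and the key identity together with the triangle inequality and the convexity estimate delivers the bound $<\eps$.

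For sufficiency, by Remark~\ref{rem:conv-hull} it suffices to approximate an arbitrary rational convex combination $\sum_{j=1}^{p}\lambda_j\mu_{y_j}$ with $y_j\in B$. Writing $\lambda_j=a_j/k$ for non-negative integers $a_j$ summing to $k$, I rewrite the target as $\tfrac{1}{k}\sum_{i=1}^{k}\mu_{\tilde y_i}$, where the tuple $(\tilde y_1,\dots,\tilde y_k)$ lists each $y_j$ with multiplicity $a_j$. Given $\eps>0$ I first pick $N$ so that $\gamma(m_T(\tilde y_j,n),\mu_{\tilde y_j})<\eps/2$ whenever $n\ge N$ and for every $j\in\{1,\dots,k\}$. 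The hypothesis applied to $(\tilde y_1,\dots,\tilde y_k)$ with tolerance $\eps/2$ then produces $x\in A$ and $n\ge N$ with $T^{kn}x=x$ and the displayed minimum below $\eps/2$. A final application of the key identity, triangle inequality and convexity bounds $\gamma\bigl(\mu_x,\sum_j\lambda_j\mu_{y_j}\bigr)$ by $\eps$.

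The only non-routine ingredient is the passage from arbitrary rational weights to a uniform average with multiplicities, which is precisely what allows the hypothesis (formulated for equally weighted tuples from $B$) to reach arbitrary rational barycenters of $\{\mu_y:y\in B\}$; everything else mirrors the scalar argument of Theorem~\ref{thm:per-dense-B}, with single empirical measures replaced by averages of $k$ empirical measures and the divisibility requirement $p\mid n$ replaced by $p\mid kn$.
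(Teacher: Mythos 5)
Your proof is correct and follows essentially the same route as the paper: reduce via Remark~\ref{rem:conv-hull} to rational convex combinations, rewrite those as equal-weight averages with multiplicities, use the identity $\frac{1}{kn}\sum_i\delta_{z_i}=\frac{1}{k}\sum_j m_T(y_j,n)$ together with Lemma~\ref{lem:finite-seq-metric}, and close with the triangle inequality plus the convexity of $\gamma$. The paper's argument is the same in substance, merely less explicitly packaged (it does not name the ``key identity'' or the convexity bound separately, but uses both).
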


\begin{proof}
For the sufficiency,
we only need to prove that $ \{\mu_x\colon x\in A \} $ is dense in $\conv_\bbq(\{\mu_y  \colon \allowbreak y\in B\})$.
Fix $\eps>0$, $ p\geq 1 $, $y_1, y_2,\dotsc, y_p\in B $, $\lambda_1,\lambda_2,\dotsc,\lambda_p$ with $\sum_{j=1}^{p}\lambda_j=1, \lambda_i > 0 $ and $\lambda_i\in\bbq$ for $i=1,2,\dotsc,p$.
Then there  exists $ k \in\bbn$ such that $\lambda_i=\frac{l_i}{k}$ with $l_i> 0$ for $i=1,2,\dotsc,p$ and $\sum_{i=1}^pl_i=k$.
   Set
   	\begin{align*}
   	&a_1=a_2=\dotsb=a_{l_1}=y_1,\\
   	&a_{l_1+1}=a_{l_1+2}=\dotsb=a_{ l_1+l_2}=y_2,\\
    &	\dotsb,\\
   	& a_{l_1+l_2+\dotsb+l_{p-1}+1}=a_{l_1+l_2+\dotsb+l_{p-1}+2}=\dotsb=a_{l_1+l_2+\dotsb+l_{p-1}+l_p}=y_p.
   	\end{align*}
Since $y_1,y_2,\dotsc,y_p$ are quasi-generic points,
there exists $N\in\bbn$ such that 
\[
\gamma(m_T(y_i,n),\mu_{y_i})<\frac{\eps}{2}
\]
for all $i=1,2,\dotsc,p$ and $n\geq N$.
By the assumption, there exists $x\in A$ and $n\in\bbn$ such that $n\geq N$, $T^{kn}x=x$ and   
\[
\min_{\sigma\in S_{kn}}\frac{1}{kn}
\sum_{i=0}^{kn-1}d(z_i,T^{\sigma(i)}x)<\frac{\eps}{2},
\] 
where 
$z_{i+(j-1)n}=T^ia_j $ for $ 0\leq i\leq n-1 $ and $ 1\leq j\leq k $.
Then applying Lemma~\ref{lem:finite-seq-metric}, we have 
\begin{align*}
\gamma\Bigl(\sum_{j=1}^p\lambda_j \mu_{y_j},\mu_x\Bigr)=
&\gamma\Bigl(\sum_{j=1}^p\frac{l_j}{k} \mu_{y_j},\mu_x\Bigr)\\
&\leq \gamma\Bigl(\sum_{j=1}^p\frac{l_j}{k} \mu_{y_j}, 
\sum_{j=1}^p\frac{l_j}{k}m_T(y_j,n)\Bigr)+
\gamma \Bigl(\sum_{j=1}^p\frac{l_j}{k}m_T(y_j,n), m_T(x,kn)\Bigr)\\
& \leq  \sum_{j=1}^p\frac{l_j}{k} \gamma\bigl( \mu_{y_j}, 
m_T(y_j,n)\bigr)+
\min_{\sigma\in S_{kn}}\frac{1}{kn}
\sum_{i=0}^{kn-1}d(z_i,T^{\sigma(i)}x)\\
&<\frac{\eps}{2}+\frac{\eps}{2}=\eps.
\end{align*}

Now we prove the necessity. 
Fix $k\geq 1$ and $y_1,y_2,\dotsc,y_k\in B$.  Clearly, $ \sum_{j=1}^{k}\frac{1}{k}\mu_{y_j} $ 
    is in $ \overline{\conv} (\{ \mu_y\colon y\in B \}) $. 
As $y_1,y_2,\dotsc,y_k$ are quasi-regular,
for every $\eps>0$, there exists $N_1\in\bbn$ such that 
for every $n>N_1$,
   	\[
   	\gamma\Big(\sum_{j=1}^{k}\frac{1}{k}\mu_{y_j} ,   
   	\sum_{j=1}^{k}\frac{1}{k} m_T(y_j,n)
   	 \Big)<\frac{\eps}{2}.
   	\]
By the assumption there exists $ x\in A$ with period $p$
   	such that 
   	\[
   	\gamma\Big(\sum_{j=1}^{k}\frac{1}{k}\mu_{y_j},   
   	\mu_x \Big)<\frac{\eps}{2}. 
   	\]
For every $N\in\bbn$, pick $n\in\bbn$ such that $n\geq \max\{N,N_1\}$ and $p|kn$.
Then  
\begin{align*}
\gamma\Bigl(\sum_{j=1}^{k}\frac{1}{k} m_T(y_j,n), m_T(x,kn)\Bigr)  
&\leq \gamma\Big(\sum_{j=1}^{k}\frac{1}{k} m_T(y_j,n),
   \sum_{j=1}^{k}\frac{1}{k}\mu_{y_j} \Bigr) +\gamma\Bigl(\sum_{j=1}^{k}\frac{1}{k}\mu_{y_j},\mu_x \Big)\\
&<\frac{\eps}{2}+\frac{\eps}{2} =\eps.
   \end{align*}
Applying Lemma~\ref{lem:finite-seq-metric}, we have
   \[
   \min_{\sigma\in S_{kn}}\frac{1}{kn}
   \sum_{i=0}^{kn-1}d(z_i,T^{\sigma(i)}x)<\eps,
   \]
   where 
   $ z_{i+(j-1)n}=T^iy_j $ for $ 0\leq i\leq n-1 $ and $ 1\leq j\leq k$.
\end{proof}

\begin{remark}
In Theorem~\ref{thm:per-dense-conv-B},
the points $y_1,y_2,\dotsc,y_k$ in $B$ usually are not pairwise distinct.
If in addition $A$ is a subset of $B$, then by Remark~\ref{rem:conv-hull} we can require that  $y_1,y_2,\dotsc,y_k$ have at most two distinct points.
\end{remark}

The concept of linkability is introduced in \cite{GK2018}.  
Let $ (X,T) $ be a Polish dynamical system 
and $ K\subset  \per(T)$.
We say that $ K $ is linkable if  for any $y_1,y_2\in K $, any $ \eps>0 $,  and any $ \lambda\in [0,1] $,  there exist   $p_1,p_2,q_1,q_2\in\bbn  $, and $ z\in K $ with $ T^{q_2}z=z$ such that
\begin{enumerate}
	\item  $ \lambda-\eps\leq \frac{p_1}{p_1+p_2}\leq \lambda+\eps$;
	\item $ p_1\leq q_1\leq (1+\eps)p_1  $ and 
	$ d(T^iz,T^iy_1)<\eps $ for $ 0\leq i\leq p_1-1 $; 
	\item $ p_2\leq q_2-q_1\leq (1+\eps)p_2 $ and 
	$ d(T^{i+q_1}z,T^iy_2)<\eps $ for $ 0\leq i\leq p_2-1 $.
\end{enumerate}
By \cite{GK2018}*{Lemma 4.13}, we can pick $p_1$ and $p_2$  such that they are both multiples of a given 
positive integer $r$. In particular, we can assume that $ T^{p_j}y_j=y_j $ for $ j=1,2$. 
It is shown in \cite{GK2018}*{Propositions 4.20, 4.22 and 4.23, respectively} that for a dynamical system 
with the periodic specification property ($\beta$-shift, $S$-gap shift, respectively) the set of periodic points is linkable. 

If we replace the metric used in the last two requirements in  linkability by the mean orbital pseudo metric, we  obtain the following  equivalence condition for the density of the convex hull.  

\begin{thm}\label{3.11}
Let $ (X,T) $ be a Polish dynamical system and 
$K \subset \per(T)$.
Then  $ \{\mu_x\colon x\in K\}$  is dense in  $ \overline{\conv}
\{\mu_x\colon x\in K\} $
if and only if 
 for any $y_1,y_2\in K $, any $ \eps>0 $,  and any $ \lambda\in [0,1] $,  there exist   $p_1,p_2,q_1,q_2\in\bbn  $, and $ z\in K $ 
 with $T^{p_1}y_1=y_1$, $T^{p_2}y_2=y_2$ and $ T^{q_2}z=z$ such that
\begin{enumerate}
	\item  $ \lambda-\eps\leq \frac{p_1}{p_1+p_2}\leq \lambda+\eps$, $ p_1\leq q_1\leq (1+\eps)p_1$, and $ p_2\leq q_2-q_1\leq (1+\eps)p_2 $;
	\item $ 
 \min\limits_{\sigma\in S_{q_2}}\displaystyle\frac{1}{q_2}
\sum_{i=0}^{q_2-1}d(x_i,T^{\sigma(i)}z)<\eps
$, 
where 
$x_i=T^iy_1$ for $ 0\leq i<q_1$ and $x_i=T^{i-q_1}y_2 $ 
for $q_1\leq i<q_2$.
\end{enumerate}
\end{thm}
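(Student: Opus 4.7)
The strategy for both directions is to translate condition~(2) into an identity between $\gamma$-distances of empirical measures via Lemma~\ref{lem:finite-seq-metric}, and to exploit that whenever $T^p y = y$ one has $m_T(y,p) = \mu_y$ \emph{exactly}, so choosing $p_1, p_2$ as multiples of the minimal periods of $y_1, y_2$ produces exact convex combinations of $\mu_{y_1}$ and $\mu_{y_2}$.

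For necessity, fix $y_1, y_2 \in K$ with minimal periods $r_1, r_2$, $\lambda \in [0,1]$, and $\eps > 0$. I pick $l_1, l_2 \in \bbn$ with $\tilde\lambda := l_1/(l_1+l_2)$ satisfying $|\tilde\lambda - \lambda| < \eps$ (corner cases $\lambda \in \{0,1\}$ are handled by taking one of $l_j$ equal to $1$ and the other $\lceil 1/\eps\rceil$). Density supplies $z \in K$ with $\gamma(\mu_z, \tilde\lambda\mu_{y_1}+(1-\tilde\lambda)\mu_{y_2}) < \eps$; letting $r$ be the period of $z$ and $M = \mathrm{lcm}(r_1, r_2, r)$, I set $p_1 = q_1 = l_1 M$, $p_2 = l_2 M$, and $q_2 = (l_1+l_2)M$. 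All periodicity constraints and condition~(1) hold, with equalities in the interval bounds. Because each $p_j$ is a multiple of $r_j$, the empirical measure $\tfrac{1}{q_2}\sum_{i=0}^{q_2-1}\delta_{x_i}$ equals $\tilde\lambda\mu_{y_1}+(1-\tilde\lambda)\mu_{y_2}$ exactly, and Lemma~\ref{lem:finite-seq-metric} converts condition~(2) into the density estimate for $z$.

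For sufficiency, Remark~\ref{rem:conv-hull} with $A=B=K$ reduces the density statement to approximating $\alpha\mu_{y_1}+(1-\alpha)\mu_{y_2}$ with $y_1, y_2 \in K$ and $\alpha \in [0,1]\cap\bbq$. Given such data and $\eps > 0$, I invoke the hypothesis with $\lambda = \alpha$ and tolerance $\eps_1 := 2\eps/7$ to obtain $p_1, p_2, q_1, q_2, z$. Setting $\beta := q_1/q_2$, Lemma~\ref{lem:finite-seq-metric} rewrites condition~(2) as
\[
\gamma\bigl(\beta\, m_T(y_1,q_1) + (1-\beta)\, m_T(y_2,q_2-q_1),\,\mu_z\bigr) < \eps_1.
\]
A triangle inequality then bounds $\gamma(\alpha\mu_{y_1}+(1-\alpha)\mu_{y_2},\mu_z)$ by three contributions: (i) $\gamma(m_T(y_j, q_j), \mu_{y_j}) \leq (q_j - p_j)/q_j \leq \eps_1$ for $j=1,2$, using $T^{p_j}y_j=y_j$ and $p_j \leq q_j \leq (1+\eps_1)p_j$; (ii) $|\beta - p_1/(p_1+p_2)| \leq 2\eps_1 p_1 p_2/(q_2(p_1+p_2)) \leq \eps_1/2$ by direct algebra; (iii) $|p_1/(p_1+p_2) - \alpha| \leq \eps_1$ from condition~(1). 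The total is at most $7\eps_1/2 = \eps$. The main technical obstacle lies in estimate~(i): because $q_j$ may strictly exceed $p_j$, the measure $m_T(y_j, q_j)$ is not literally $\mu_{y_j}$; the identity $m_T(y_j, p_j) = \mu_{y_j}$---crucially relying on the stronger requirement $T^{p_j}y_j = y_j$ rather than merely $y_j \in \per(T)$---is what keeps the discrepancy within $\eps_1$, and the remaining work is careful bookkeeping of the rational parameters.
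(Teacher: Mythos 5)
Your proof is correct and follows essentially the same route as the paper's: both reduce via Remark~\ref{rem:conv-hull} to convex combinations of two periodic measures, translate condition~(2) into a $\gamma$-estimate through Lemma~\ref{lem:finite-seq-metric}, and exploit that $T^{p_j}y_j=y_j$ makes $m_T(y_j,p_j)=\mu_{y_j}$ exact (the paper realizes $q_2$ as a multiple of $p_1'+p_2'$ where you use an lcm, but these are cosmetically different ways of the same choice). The only real difference is bookkeeping in the sufficiency direction, where you split the slack coming from $q_j>p_j$ into separate per-orbit and weight-mismatch pieces while the paper absorbs it all into the single term $(q_2-p_1-p_2)/q_2$; also note a small notational slip in your item~(i): for $j=2$ the quantities should read $m_T(y_2,q_2-q_1)$ and $(q_2-q_1-p_2)/(q_2-q_1)$ rather than $m_T(y_2,q_2)$ and $(q_2-p_2)/q_2$, since the relevant hypothesis from condition~(1) is $p_2\leq q_2-q_1\leq(1+\eps_1)p_2$.
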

\begin{proof}
As $d$ is a bounded metric on $X$, without loss of generality assume that the diameter
of $X$ is $1$.
We first prove the sufficiency.
By Remark~\ref{rem:conv-hull}, we only need to prove that 
$ \{\mu_x\colon x\in K\}$ is dense in 
$\{\lambda \mu_{y_1}+(1-\lambda)\mu_{y_2}\colon \lambda\in [0,1], y_1,y_2\in K\}$.
Fix $\eps>0$, $\lambda\in (0,1)$ and $y_1,y_2\in K$.
Then there exist $p_1,p_2,q_1,q_2\in\bbn  $, and $ z\in K $ 
 with $T^{p_1}y_1=y_1$, $T^{p_2}y_2=y_2$ and $ T^{q_2}z=z$ such that
\begin{enumerate}
	\item  $ \lambda-\eps\leq \frac{p_1}{p_1+p_2}\leq \lambda+\eps$, $ p_1\leq q_1\leq (1+\eps)p_1$, and $ p_2\leq q_2-q_1\leq (1+\eps)p_2 $;
	\item $ 
 \min\limits_{\sigma\in S_{q_2}}\displaystyle\frac{1}{q_2}
\sum_{i=0}^{q_2-1}d(x_i,T^{\sigma(i)}z)<\eps
$, 
where 
$x_i=T^iy_1$ for $ 0\leq i< q_1$ and $x_i=T^{i-q_1}y_2 $ 
for $q_1\leq i<q_2$.
\end{enumerate}
By the definition of $\gamma$ and Lemma~\ref{lem:finite-seq-metric}, one has
\begin{align*}
&\gamma\Bigl( \frac{p_1}{p_1+p_2} \mu_{y_1} + \frac{p_2}{p_1+p_2} \mu_{y_2},\mu_{z}\Bigr)=
\gamma\biggl( \frac{1}{p_1+p_2} \biggl(\sum_{i=0}^{p_1-1}\delta_{T^iy_1}+ \sum_{i=0}^{p_2-1}\delta_{T^iy_2}\biggr), \frac{1}{q_2} \sum_{i=0}^{q_2-1}\delta_{T^iz}\biggr)\\
&\qquad \qquad \leq \gamma\biggl( \frac{1}{p_1+p_2} \biggl(\sum_{i=0}^{p_1-1}\delta_{T^iy_1}+ \sum_{i=0}^{p_2-1}\delta_{T^iy_2}\biggr), \frac{1}{q_2}\biggl(\sum_{i=0}^{q_1-1}\delta_{T^iy_1}+ \sum_{i=0}^{q_2-q_1-1}\delta_{T^iy_2}\biggr)\biggr)\\
&\qquad \qquad \qquad \qquad  +\gamma\biggl(\frac{1}{q_2}\biggl(\sum_{i=0}^{q_1-1}\delta_{T^iy_1}+ \sum_{i=0}^{q_2-q_1-1}\delta_{T^iy_2}\biggr), \frac{1}{q_2} \sum_{i=0}^{q_2-1}\delta_{T^iz}\biggr)\\
&\qquad \qquad \leq \frac{q_2-(p_1+p_2)}{q_2}+\min_{\sigma\in S_{q_2}}\frac{1}{q_2}
\sum_{i=0}^{q_2-1}d(x_i,T^{\sigma(i)}z)\\
&\qquad \qquad< \eps+\eps = 2\eps.
\end{align*}
Then
\begin{align*}
\gamma(   \lambda \mu_{y_1}+(1-\lambda)\mu_{y_2},\mu_{z})
&\leq \gamma\Bigl(   \lambda \mu_{y_1}+(1-\lambda)\mu_{y_2}, 
\frac{p_1}{p_1+p_2} \mu_{y_1}+ \frac{p_2}{p_1+p_2} \mu_{y_2}\Bigr)\\
&\qquad \qquad +\gamma\Bigl( \frac{p_1}{p_1+p_2} \mu_{y_1}+ \frac{p_2}{p_1+p_2} \mu_{y_2},\mu_{z}\Bigr)\\
&\leq \Bigl| \lambda- \frac{p_1}{p_1+p_2}\Bigr|+\Bigl|1-\lambda - \frac{p_2}{p_1+p_2}\Bigr| +2\eps \\
&< \eps+\eps+2\eps = 4\eps.
\end{align*}

Now we prove the necessity. Fix $\eps>0$, $\lambda\in (0,1)$ and $y_1,y_2\in K$. Then there exist  $p_1',p_2'\in\bbn$ with $T^{p_1'}y_1=y_1$, $T^{p_2'}y_2=y_2$ such that 
$\lambda-\eps\leq \frac{p_1'}{p_1'+p_2'}\leq \lambda+\eps$.
By the assumption, there exists $z\in K$ such that 
\[  
\gamma\Bigl( \frac{p_1'}{p_1'+p_2'} \mu_{y_1} + \frac{p_2'}{p_1'+p_2'} \mu_{y_2},\mu_{z}\Bigr)<\eps.
\] 
Let $q_2$ be a multiple of $(p_1'+p_2')$ with $T^{q_2}z=z$.
Let $p_1=\frac{p_1'q_2}{p_1'+p_2'}  $,
$p_2=\frac{p_2'q_2}{p_1'+p_2'}  $,  and $q_1=p_1$.
Applying  Lemma~\ref{lem:finite-seq-metric}, one has
\begin{align*}
  \gamma\Bigl( \frac{p_1'}{p_1'+p_2'} \mu_{y_1} + \frac{p_2'}{p_1'+p_2'} \mu_{y_2},\mu_{z}\Bigr)
 & = \gamma \biggl( \frac{1}{q_2} \biggl(\sum_{i=0}^{p_1-1}
  \delta_{T^{i}{y_1}} + 
  \sum_{i=0}^{p_2-1} \delta_{T^{i}{y_2}}  \biggr), \frac{1}{q_2}\sum_{i=0}^{q_2-1}\delta_{T^{i}{z}}  \biggr)\\
  &=  \min\limits_{\sigma\in S_{q_2}}\displaystyle\frac{1}{q_2}
\sum_{i=0}^{q_2-1}d(x_i,T^{\sigma(i)}z)<\eps,
\end{align*}
where 
$x_i=T^iy_1$ for $ 0\leq i< q_1$ and $x_i=T^{i-q_1}y_2 $ 
for $q_1\leq i<q_2$.
\end{proof}

The following result shows that linkability is stronger than the equivalent condition in Theorem ~\ref{3.11}.

\begin{prop}\label{1}
Let $ (X,T) $ be a Polish dynamical system.
If $K\subset \per(T)$ is linkable,
then  for any $y_1,y_2\in K $, any $ \eps>0 $,  and any $ \lambda\in [0,1] $,  there exist   $p_1,p_2,q_1,q_2\in\bbn  $, and $ z\in K $ 
 with $T^{p_1}y_1=y_1$, $T^{p_2}y_2=y_2$ and $ T^{q_2}z=z$ such that
\begin{enumerate}
	\item  $ \lambda-\eps\leq \frac{p_1}{p_1+p_2}\leq \lambda+\eps$, $ p_1\leq q_1\leq (1+\eps)p_1$, and $ p_2\leq q_2-q_1\leq (1+\eps)p_2 $;
	\item $ 
 \displaystyle\frac{1}{q_2}
\sum_{i=0}^{q_2-1}d(x_i,T^{i}z)<\eps
$, 
where 
$x_i=T^iy_1$ for $ 0\leq i<q_1$ and $x_i=T^{i-q_1}y_2 $ 
for $q_1\leq i<q_2$.
\end{enumerate}
\end{prop}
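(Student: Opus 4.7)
The plan is to apply the linkability hypothesis directly, but with a tightened parameter $\eps' := \eps/2$, and then bound the sum in condition (2) by splitting it into four blocks. As in the proofs of Lemma~\ref{close-gamma} and Theorem~\ref{3.11}, I may assume without loss of generality that $\diam(X) \le 1$.

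First I would invoke the definition of linkability with parameter $\eps'$ applied to $y_1,y_2,\lambda$, using the remark after the definition that lets us arrange $T^{p_j}y_j=y_j$ for $j=1,2$. This produces $p_1, p_2, q_1, q_2 \in \bbn$ and $z \in K$ with $T^{q_2}z = z$ satisfying $\lambda - \eps' \le \frac{p_1}{p_1+p_2} \le \lambda + \eps'$, $p_1 \le q_1 \le (1+\eps')p_1$ and $p_2 \le q_2-q_1 \le (1+\eps')p_2$, together with the two pointwise tracking estimates $d(T^iz, T^iy_1) < \eps'$ for $0 \le i < p_1$ and $d(T^{i+q_1}z, T^iy_2) < \eps'$ for $0 \le i < p_2$. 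Since $\eps' < \eps$, condition (1) of the proposition is then immediate.

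Next I would estimate $S := \sum_{i=0}^{q_2-1} d(x_i, T^iz)$ by decomposing the index set into four blocks: $[0, p_1)$, $[p_1, q_1)$, $[q_1, q_1+p_2)$, and $[q_1+p_2, q_2)$. On the first block one has $x_i=T^iy_1$ and the tracking estimate gives $d(x_i, T^iz) < \eps'$; on the third block one has $x_i = T^{i-q_1}y_2$ and the second tracking estimate (with $j = i-q_1$) again gives $d(x_i,T^iz)<\eps'$. Thus these two blocks contribute at most $\eps'p_1$ and $\eps'p_2$. On the second and fourth blocks I have no pointwise control, but their lengths $q_1-p_1$ and $q_2-q_1-p_2$ are bounded by $\eps'p_1$ and $\eps'p_2$ respectively, and each summand is at most $\diam(X) \le 1$. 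Adding the four contributions yields $S \le 2\eps'(p_1+p_2) \le 2\eps' q_2$, since $q_2 \ge p_1+p_2$. Hence $\tfrac{1}{q_2}S \le 2\eps' = \eps$, giving (2).

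I do not anticipate a serious obstacle; the argument is essentially bookkeeping. The only point of care is choosing $\eps'$ small enough that both the distortion bound on $\frac{p_1}{p_1+p_2}$ and the gap-length estimates combine to produce quantities below $\eps$, which is why I work with $\eps' = \eps/2$ rather than $\eps$ itself. If a strict inequality $<\eps$ is desired one may either tighten to $\eps'=\eps/3$ or observe that the first and third blocks contribute strictly less than $\eps' p_1$ and $\eps'p_2$ (since the tracking estimates are strict), which gives $\tfrac{1}{q_2}S<2\eps'=\eps$.
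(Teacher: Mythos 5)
Your proof is correct and follows essentially the same argument as the paper: decompose the sum over $[0,q_2)$ into the two tracking blocks $[0,p_1)$ and $[q_1,q_1+p_2)$ (controlled by the linkability estimates) and the two gap blocks $[p_1,q_1)$ and $[q_1+p_2,q_2)$ (controlled by $\diam(X)\le 1$ and the length bounds), then divide by $q_2\ge p_1+p_2$. You are in fact a little more careful than the paper, which applies linkability with $\eps$ itself and ends with the bound $2\eps$ rather than $\eps$; your choice of $\eps'=\eps/2$ and the observation that the tracking bounds are strict fix this harmless discrepancy.
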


\begin{proof}
As $d$ is a bounded metric on $X$, without loss of generality assume that the diameter
of $X$ is $1$.
Fix  $y_1,y_2\in K $, $ \eps>0 $,  and  $ \lambda\in [0,1] $.
There exist $p_1,p_2,q_1,q_2\in\bbn  $, and $ z\in K $ with $ T^{q_2}z=z$ such that
\begin{enumerate}
	\item  $ \lambda-\eps\leq \frac{p_1}{p_1+p_2}\leq \lambda+\eps$;
	\item $ p_1\leq q_1\leq (1+\eps)p_1  $ and 
	$ d(T^iz,T^iy_1)<\eps $ for $ 0\leq i\leq p_1-1 $; 
	\item $ p_2\leq q_2-q_1\leq (1+\eps)p_2 $ and 
	$ d(T^{i+q_1}z,T^iy_2)<\eps $ for $ 0\leq i\leq p_2-1 $.
\end{enumerate}
Let $x_i=T^iy_1$ for $ 0\leq i<q_1$ and $x_i=T^{i-q_1}y_2 $ 
for $q_1\leq i<q_2$.
Then 
\begin{align*}
\frac{1}{q_2}\sum_{i=0}^{q_2-1} d(x_i,T^iz) &= 
 \frac{1}{q_2}  \biggl( \sum_{i=0}^{p_1-1} d(T^iy_1,T^iz) +    
   \sum_{i=p_1}^{q_1-1} d(T^iy_1,T^iz) \\
    &\qquad\qquad  +\sum_{i=q_1}^{p_2+q_1-1} d(T^{i-q_1}y_2 ,T^{i}z)  
    + \sum_{i=p_2+q_1}^{q_2-1} d(T^{i-q_1}y_2 ,T^{i}z)  
    \biggr)\\
    & \leq \frac{1}{q_2} ( p_1\eps + q_1-p_1 + p_2\eps + q_2-p_2-q_1)\\
    &<2\eps. \qedhere 
\end{align*}
\end{proof}

\begin{cor}[\cite{GK2018}*{Theorem 4.25}]
Let $ (X,T) $ be a Polish dynamical system.
If $K\subset \per(T)$ is linkable, then $ \{\mu_x\colon x\in K \} $ is dense in $ \overline{\conv}\{\mu_x\colon x\in K \} $.
\end{cor}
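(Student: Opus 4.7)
The plan is to reduce this corollary to a straightforward combination of Proposition~\ref{1} and Theorem~\ref{3.11}, both of which have just been established. Given a linkable $K\subset\per(T)$, I will apply Proposition~\ref{1}: for any $y_1,y_2\in K$, any $\eps>0$ and any $\lambda\in[0,1]$, it supplies integers $p_1,p_2,q_1,q_2$ and a point $z\in K$ with $T^{q_2}z=z$ satisfying the parameter conditions in item~(1) of Theorem~\ref{3.11} together with the stronger identity-permutation estimate
\[
\frac{1}{q_2}\sum_{i=0}^{q_2-1}d(x_i,T^{i}z)<\eps,
\]
where $x_i=T^iy_1$ for $0\leq i<q_1$ and $x_i=T^{i-q_1}y_2$ for $q_1\leq i<q_2$.

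The crucial (but trivial) step is next: since the identity permutation belongs to $S_{q_2}$,
\[
\min_{\sigma\in S_{q_2}}\frac{1}{q_2}\sum_{i=0}^{q_2-1}d(x_i,T^{\sigma(i)}z)\leq \frac{1}{q_2}\sum_{i=0}^{q_2-1}d(x_i,T^{i}z)<\eps.
\]
Together with item~(1) of Proposition~\ref{1}, this is precisely the hypothesis required by the sufficiency direction of Theorem~\ref{3.11}, whose conclusion is exactly the desired density of $\{\mu_x\colon x\in K\}$ in $\overline{\conv}\{\mu_x\colon x\in K\}$.

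I do not anticipate a real obstacle here: the substantive work is packaged in Proposition~\ref{1}, which converts the three initial-segment shadowing conditions in the definition of linkability into a single orbit-averaged bound using only that $d$ is bounded (with diameter at most $1$), and in Theorem~\ref{3.11}, which transports such bounds to distances between empirical measures via Lemma~\ref{lem:finite-seq-metric}. The only routine bookkeeping detail is that the $y_1,y_2\in K$ fed into Proposition~\ref{1} can be chosen so that $T^{p_1}y_1=y_1$ and $T^{p_2}y_2=y_2$; this is guaranteed by the standard strengthening of linkability noted just after its definition (based on \cite{GK2018}*{Lemma 4.13}), so applying Proposition~\ref{1} poses no additional difficulty.
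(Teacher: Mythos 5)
Your proposal is correct and is exactly the intended argument: the paper states this as an immediate corollary of Proposition~\ref{1}, and the chain you describe — Proposition~\ref{1} gives the identity-permutation bound, which dominates the minimum over $S_{q_2}$, so the hypothesis of the sufficiency direction of Theorem~\ref{3.11} is met — is precisely the unwritten proof. Your observation that Proposition~\ref{1} already packages the requirement $T^{p_j}y_j=y_j$ (via the post-definition remark on \cite{GK2018}*{Lemma 4.13}) is correct and makes the application of Theorem~\ref{3.11} seamless.
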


\begin{cor}
Let $ (X,T) $ be a Polish dynamical system.
If $K\subset \per(T)$ is linkable and $\{\mu_x\colon x\in K\}$ is dense in $\calm_T^e(X)$, then $\{\mu_x\colon x\in K\}$ is dense in $\calm_T(X)$.
\end{cor}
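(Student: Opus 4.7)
The plan is to deduce the corollary by combining the immediately preceding corollary (linkability implies density of $\{\mu_x\colon x\in K\}$ in $\overline{\conv}\{\mu_x\colon x\in K\}$) with the hypothesis that $\{\mu_x\colon x\in K\}$ is dense in $\calm_T^e(X)$, together with the ergodic decomposition for Polish dynamical systems. First, I would observe that every periodic orbit measure is ergodic, so $\{\mu_x\colon x\in K\}\subset \calm_T^e(X)$; combined with the density hypothesis this yields $\overline{\{\mu_x\colon x\in K\}}=\calm_T^e(X)$, and hence
\[
\overline{\conv}\{\mu_x\colon x\in K\}=\overline{\conv}(\calm_T^e(X)).
\]

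Next, I would invoke the ergodic decomposition valid in the Polish setting (\cite{Oxt1952}*{Section 8}) to write any $\mu\in\calm_T(X)$ as $\mu=\int_{\calm_T^e(X)}\nu\, d\rho_\mu(\nu)$ for a Borel probability measure $\rho_\mu$ on $\calm_T^e(X)$. Approximating $\rho_\mu$ in the weak$^*$-topology by finitely supported measures $\rho_n=\sum_i\lambda_i^{(n)}\delta_{\nu_i^{(n)}}$ on $\calm_T^e(X)$ (possible because $\calm_T^e(X)$ is a Polish subspace of $\calmx$) and applying continuity of the barycenter map gives $\sum_i\lambda_i^{(n)}\nu_i^{(n)}\to \mu$ weakly, hence in $\gamma$. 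This shows $\calm_T(X)\subset \overline{\conv}(\calm_T^e(X))$.

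Putting the two inclusions together, $\calm_T(X)\subset \overline{\conv}\{\mu_x\colon x\in K\}$, and the preceding corollary immediately yields density of $\{\mu_x\colon x\in K\}$ in the latter set, hence in $\calm_T(X)$. A cleaner, self-contained version that avoids mentioning barycenters explicitly is a direct three-$\eps$ argument: given $\mu\in\calm_T(X)$ and $\eps>0$, approximate $\mu$ within $\eps/3$ by a finite convex combination $\sum\lambda_j\nu_j$ of ergodic measures (ergodic decomposition), each $\nu_j$ within $\eps/3$ by some $\mu_{x_j}$ with $x_j\in K$ (density hypothesis, combined with the convexity of $\gamma$ in each argument), and finally $\sum\lambda_j\mu_{x_j}\in\overline{\conv}\{\mu_x\colon x\in K\}$ within $\eps/3$ by a single $\mu_x$ with $x\in K$ (preceding corollary). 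Three applications of the triangle inequality close the proof.

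The main obstacle I anticipate is solely in justifying the ergodic-decomposition step in the non-compact Polish case, since $\calm_T(X)$ is not a priori compact and one must carefully establish that finite convex combinations of ergodic measures are weak$^*$-dense in $\calm_T(X)$; this is the content of the Oxtoby decomposition together with weak$^*$-density of finitely supported probability measures in $\calm(\calm_T^e(X))$. Once this is in hand the remainder is just a triangle-inequality combination of the two density statements already at our disposal.
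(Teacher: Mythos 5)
Your proposal is correct and is essentially the argument the paper intends: the corollary is stated without proof immediately after the preceding corollary (linkability implies density in $\overline{\conv}\{\mu_x\colon x\in K\}$) and after the paper has already recorded the Oxtoby-style ergodic decomposition for Polish systems, so the intended deduction is exactly the one you give, namely $\calm_T(X)\subset\overline{\conv}(\calm_T^e(X))=\overline{\conv}\{\mu_x\colon x\in K\}$ followed by the preceding corollary. Your caution about justifying weak$^*$-density of finite convex combinations of ergodic measures in the non-compact Polish setting is warranted but is covered by the Oxtoby reference the paper already invokes, and your three-$\eps$ reformulation is a clean way to package the same argument.
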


Let $(X,T)$ be a dynamical system and  $n\geq 1$. We say that a collection $\{D_0,D_1,\dotsc,D_{k-1}\}$
of closed subsets of $X$ is a periodic decomposition if 
$T(D_i)\subset D_{i+1 \pmod k}$ for each $0\leq i<k$ and $\bigcup_{i=0}^{k-1} D_i=X$.
 
\begin{prop}\label{prop:per-decomp}
Let $(X,T)$ be a Polish dynamical system with $T$ being uniformly continuous.
Assume that $\{D_0,D_1,\dotsc,D_{k-1}\}$ is a periodic decomposition.
\begin{enumerate}
    \item If $\{\mu_x\colon x\in \per(T^k)\cap D_0\}$ is dense in $\calm_{T^k}^e(X)$, then $\{\mu_x\colon x\in \per(T)\}$ is dense in $\calm_T^e(X)$.
    \item If $\{\mu_x\colon x\in \per(T^k)\cap D_0\}$ is dense in $\calm_{T^k}(X)$, then $\{\mu_x\colon x\in \per(T)\}$ is dense in $\calm_T(X)$.
\end{enumerate}
\end{prop}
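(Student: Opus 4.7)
My plan is to compare the $T$-dynamics with the $T^k$-dynamics restricted to $D_0$ via the \emph{averaging map}
\[
\Phi\colon \{\nu\in\calm_{T^k}(X)\colon \supp\nu\subset D_0\}\to \calm_T(X),\qquad \nu\mapsto \tfrac{1}{k}\sum_{i=0}^{k-1}T_*^i\nu.
\]
Since $T^k(D_0)\subset D_0$ gives $T_*^k\nu=\nu$ for any such $\nu$, a direct computation shows $T_*\Phi(\nu)=\Phi(\nu)$, so the image is $T$-invariant. The map $\Phi$ is weak-$*$ continuous (each $T_*^i$ is continuous because $T$ is), and in fact Lipschitz in the metric $\gamma$ by the uniform continuity of $T$. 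The key identity is that for any $x\in\per(T^k)\cap D_0$, the point $x$ is automatically $T$-periodic (its $T$-period divides $k$ times its $T^k$-period), and splitting the $T$-orbit into $k$ arithmetic progressions modulo $k$ yields $\mu_x=\Phi(\mu_x^{T^k})$, where $\mu_x$ and $\mu_x^{T^k}$ denote the empirical measures on the $T$- and $T^k$-orbits of $x$, respectively.

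I would next show that every $T$-ergodic $\mu$ has the form $\Phi(\nu)$ for some $T^k$-ergodic $\nu$ supported in $D_0$. The cyclic cover $T(D_i)\subset D_{i+1\bmod k}$ yields $D_i\subset T^{-1}(D_{i+1\bmod k})$, so $T$-invariance of $\mu$ forces $\mu(D_0)=\mu(D_1)=\dotsb=\mu(D_{k-1})\ge 1/k$ (because the $D_j$ cover $X$). By Birkhoff's theorem applied to $T$ with the ergodic measure $\mu$, $\mu$-a.e.\ $y$ is $T$-generic for $\mu$; applied to $T^k$ with the $T^k$-ergodic decomposition of $\mu$, $\mu$-a.e.\ $y$ is $T^k$-generic for its $T^k$-ergodic component $\nu_y$. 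Intersecting these two full-measure sets with $D_0$ (which has positive $\mu$-measure) gives some $y\in D_0$ satisfying both conditions. The $T^k$-orbit of this $y$ remains in the closed set $D_0$, so $\supp\nu_y\subset D_0$. Passing $n\to\infty$ in
\[
\frac{1}{kn}\sum_{i=0}^{kn-1}\delta_{T^i y}=\frac{1}{k}\sum_{j=0}^{k-1}T_*^j\!\left(\frac{1}{n}\sum_{i=0}^{n-1}\delta_{T^{ki}y}\right)
\]
then gives $\mu=\Phi(\nu_y)$.

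Part (1) is now immediate: by hypothesis there exist $x_m\in\per(T^k)\cap D_0$ with $\mu_{x_m}^{T^k}\to\nu_y$, and applying $\Phi$ gives $\mu_{x_m}=\Phi(\mu_{x_m}^{T^k})\to\Phi(\nu_y)=\mu$, with each $x_m\in\per(T)$. For part (2) I would use that $\calm_T(X)$ is the closed convex hull of $\calm_T^e(X)$ for Polish systems (see \cite{Oxt1952}*{Section 8}), reducing the task to approximating a finite convex combination $\sum_{j=1}^{\ell}\lambda_j\mu^{(j)}$ of $T$-ergodic measures. Writing each $\mu^{(j)}=\Phi(\nu^{(j)})$ via the previous construction, linearity of $\Phi$ gives $\sum_j\lambda_j\mu^{(j)}=\Phi(\sum_j\lambda_j\nu^{(j)})$ with $\sum_j\lambda_j\nu^{(j)}\in\calm_{T^k}(X)$ supported in $D_0$; by the hypothesis of (2) this measure is approximated in $\gamma$ by $\mu_{x_m}^{T^k}$ for some $x_m\in\per(T^k)\cap D_0$, and continuity of $\Phi$ finishes the argument.

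The main obstacle is the lift $\mu=\Phi(\nu_y)$: it requires simultaneously invoking the Birkhoff theorem for $T$ and for $T^k$, together with the topological observation that $T^k(D_0)\subset D_0$ forces the limiting $T^k$-ergodic component of any $y\in D_0$ to be supported in $D_0$. Once this lift is available, the remainder is a routine continuity argument for the averaging map $\Phi$.
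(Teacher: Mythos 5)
Your proof is correct, but it takes a genuinely different route than the paper's. The paper does not introduce the averaging map $\Phi$; instead it works inside the pseudo-metric framework of Section~3: for an ergodic $\mu$ it restricts to $D_0$ to get a $T^k$-ergodic $\mu_0$, picks a $T^k$-generic point $y$ for $\mu_0$ (which is also $T$-generic for $\mu$), and then uses uniform continuity of $T$ to convert a $T^k$-orbit permutation bound
\[
\min_{\sigma\in S_n}\frac{1}{n}\sum_{i=0}^{n-1}d\bigl((T^k)^ix,(T^k)^{\sigma(i)}y\bigr)<\delta
\]
into a $T$-orbit bound of the form required by Theorem~\ref{thm:per-dense-B}, and similarly uses Theorem~\ref{thm:per-dense-conv-B} for part~(2). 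Your approach replaces this quantitative estimate with the weak-$*$ continuous, affine map $\Phi(\nu)=\frac{1}{k}\sum_{i=0}^{k-1}T_*^i\nu$, the identity $\mu_x=\Phi(\mu_x^{T^k})$ for $x\in\per(T^k)\cap D_0$, and the lift $\mu=\Phi(\nu_y)$ built from Birkhoff's theorem applied simultaneously to $T$ and to $T^k$. This is cleaner and more conceptual; notably, it only requires continuity of $T$ (pushforward is already weak-$*$ continuous), so you do not actually use the uniform continuity hypothesis, which the paper's proof does use in an essential way. Your argument does, however, lean on the ergodic decomposition for Polish systems in part~(2) (to express $\calm_T(X)$ as the closed convex hull of $\calm_T^e(X)$), whereas the paper gets this structure for free from the statement of Theorem~\ref{thm:per-dense-conv-B}.

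One small inaccuracy: the claim that $\Phi$ is \emph{Lipschitz} for $\gamma$ because $T$ is uniformly continuous is not justified --- if $f$ is $1$-Lipschitz, $f\circ T$ is uniformly continuous but generally not Lipschitz, so the Kantorovich--Rubinshtein dual does not directly yield a Lipschitz estimate for $T_*$. This does not affect the proof since you only invoke weak-$*$ (equivalently $\gamma$-) continuity of $\Phi$, which follows from mere continuity of $T$; I would just delete the Lipschitz remark.
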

\begin{proof}
Let $\mu\in\calm_T(X)$. As $T^{-1}(D_i)\supset D_{i-1\pmod k}$, $\mu(D_0)=\mu(D_1)=\dotsb=\mu(D_{k-1})>0$.
For each $i=0,1,\dotsc,k-1$, let $\mu_i=\mu|_{D_i}$, i.e.
$\mu_i(A)=\frac{\mu(A\cap D_i)}{\mu(D_i)}$ for every Borel subset $A$ of $X$.
Then each $\mu_i$ is $T^k$-invariant and $\supp(\mu_i)\subset D_i$.

(1) If $\mu$ is ergodic, then it is easy to check that each $\mu_i$ is ergodic for $T^k$. Pick a generic point $y$ for $\mu_0$ with respect to $T^k$. Then for each $i=1,2,\dotsc,k-1$,
$T^iy$ is generic for $\mu_i$  with respect to $T^k$ and $y$ is also generic for $\mu$ with respect to $T$.
Fix $\eps>0$ and $N\in\bbn$.
By the uniform continuity of $T$, there exists $\delta>0$ such that 
for every $a,b\in X$ with $d(a,b)<\delta$ one has $d(T^ia,T^ib)<\eps$ for $i=0,1,\dotsc,k-1$.
As $\{\mu_x\colon x\in \per(T^k)\cap D_0\}$ is dense in $\calm_{T^k}^e(X)$, 
by Theorem~\ref{thm:per-dense-B} 
there exist $x\in \per(T^k)\cap D_0$
and $n\in\bbn$ such that $n\geq N$, $T^n x= x$ and 
\[
\min_{\sigma\in S_{n} }\frac{1}{n}\sum_{i=0}^{n-1}d((T^k)^ix,(T^k)^{\sigma(i)}y )<\delta.
\]
Then $x\in \per(T)$, $T^{kn}x=x$ and 
\[
\min_{\sigma\in S_{kn} }\frac{1}{kn}\sum_{i=0}^{kn-1}d(T^ix,T^{\sigma(i)}y )<\eps.
\]
By Theorem~\ref{thm:per-dense-B} again, $\{\mu_x\colon x\in \per(T)\}$ is dense in $\calm_T^e(X)$.

Using Theorem~\ref{thm:per-dense-conv-B} instead of Theorem~\ref{thm:per-dense-B},
similar to (1) one can prove  (2).
\end{proof}

\begin{cor}
Let $f\colon  [0, 1] \to [0, 1]$ be a continuous map.
If $f$ is transitive then  $\{\mu_x\colon x\in \per(f)\}$ is dense in $\calm_f([0,1])$.
\end{cor}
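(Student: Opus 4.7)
The plan is to invoke the classical structural dichotomy for transitive continuous interval maps together with Blokh's specification theorem and Sigmund's density theorem, handling the imprimitive case through the preceding Proposition~\ref{prop:per-decomp}. A standard result in one-dimensional dynamics asserts that if $f\colon[0,1]\to[0,1]$ is continuous and transitive, then either $f$ is topologically mixing on $[0,1]$, or there exists $c\in(0,1)$ with $f([0,c])=[c,1]$ and $f([c,1])=[0,c]$, in which case $\{[0,c],[c,1]\}$ is a period-$2$ decomposition for $f$ and $f^2$ is topologically mixing on each of $[0,c]$ and $[c,1]$.

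In the mixing case, Blokh's theorem gives that $f$ has the periodic specification property; then Sigmund's theorem implies that $\{\mu_x\colon x\in\per(f)\}$ is dense in $\calm_f([0,1])$, and the conclusion is immediate.

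In the period-$2$ case, I apply Blokh's theorem to the topologically mixing map $f^2|_{[0,c]}$ to obtain the periodic specification property for $f^2|_{[0,c]}$, and then Sigmund's theorem to conclude that $\{\mu_x\colon x\in\per(f^2)\cap[0,c]\}$ is dense in $\calm_{f^2|_{[0,c]}}([0,c])$. Finally, I invoke Proposition~\ref{prop:per-decomp}(2) with $T=f$, $k=2$, and $D_0=[0,c]$ to lift this density to $\{\mu_x\colon x\in\per(f)\}$ being dense in $\calm_f([0,1])$.

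The main conceptual point, which is already absorbed into the statement of Proposition~\ref{prop:per-decomp}, is the transfer between the dynamics of $f^2$ on the half-interval and the dynamics of $f$ on the whole interval; each $f$-invariant measure on $[0,1]$ is determined by its restriction to $[0,c]$ through the swapping action of $f$ between the two halves, which identifies the two density statements. The remaining ingredients — the structural dichotomy for transitive interval maps, Blokh's specification theorem for topologically mixing interval maps, and Sigmund's classical density theorem — are applied as black boxes.
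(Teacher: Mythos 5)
Your proof is correct and follows essentially the same route as the paper: the structural dichotomy for transitive interval maps (mixing vs.\ a period-2 decomposition with $f^2$ mixing on each half, which is \cite{Rue2017}*{Theorem 2.19}), the specification property for mixing interval maps (\cite{Rue2017}*{Theorem 3.4}), Sigmund's density theorem, and Proposition~\ref{prop:per-decomp}(2) to handle the non-mixing case. One minor remark: the literal hypothesis of Proposition~\ref{prop:per-decomp}(2) asks for density in $\calm_{T^k}(X)$, whereas you (correctly, and exactly as the paper implicitly does) supply density only in $\calm_{T^k|_{D_0}}(D_0)$; this is what the proposition's proof actually uses, since each $T$-invariant $\mu$ is approximated through its restriction $\mu|_{D_0}$, so the application is sound.
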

\begin{proof}
If $f$ is weakly mixing, then by \cite{Rue2017}*{Theorem 3.4} 
$f$ has the periodic specification property. Hence $\{\mu_x\colon x\in \per(f)\}$ is dense in $\calm_f([0,1])$. 
If $f$ is transitive but not weakly mixing, then by \cite{Rue2017}*{Theorem 2.19} there exists $c\in (0,1)$
such that $\{[0,c],[c,1]\}$ is a periodic decomposition.
Moreover $([0,c],f^2)$ is weakly mixing.
Now the result follows from Proposition~\ref{prop:per-decomp}.
\end{proof}

\section{Dynamical systems with asymptotic orbital average
shadowing property}
Let $(X,T)$ be a Polish dynamical system  and  $d$ be   a compatible bounded complete metric  on $X$.
A sequence $\{x_i\}_{i=0}^\infty$ in $X$ is called an asymptotic average pseudo-orbit for $T$ if 
\[
\lim_{n\to\infty}\frac{1}{n}\sum_{i=0}^{n-1}d(Tx_i,x_{i+1})=0.
\]
Following \cite{Gu2007}, we say that a Polish dynamical system $(X,T)$  has the asymptotic average shadowing property if for every asymptotic average pseudo-orbit  $\{x_i\}_{i=0}^\infty$ of $X$ there exists $x\in X$ such that
\[
\lim_{n\to\infty}\frac{1}{n}\sum_{i=0}^{n-1}d(T^ix,x_{i})=0.
\]
Given a sequence $\{x_i\}_{i=0}^\infty$ in $X$ and $n\in\bbn$, define 
\[
m(\{x_i\}_{i=0}^\infty,n)=\frac{1}{n}\sum_{i=0}^{n-1}\delta_{x_i}.
\]
We denote by  $\calv(\{x_i\}_{i=0}^\infty)$ the collection of all the limit points of subsequences of $\{m(\{x_i\}_{i=0}^\infty,n)\}_{n=1}^\infty$.
It is clear that $\calv(\{x_i\}_{i=0}^\infty)$ is a  closed subset of $\calmx$, 
but it may be empty.
As  $\lim_{n\to\infty} \gamma(m(\{x_i\}_{i=0}^\infty,n+1),m(\{x_i\}_{i=0}^\infty,n))=0$, the set   $\calv(\{x_i\}_{i=0}^\infty)$ is connected.
If $X$ is a compact metric space, then $(\calmx,\gamma)$ is also a compact metric space  and $\calv(\{x_i\}_{i=0}^\infty)$ is  not empty.
For $x\in X$,  denote  $\calv(\{T^ix\}_{i=0}^\infty)$  by $\calv_T(x)$ for shortly.

\begin{lem} \label{lem:lim-0-V}
Let $X$ be a Polish space and $\{x_i\}_{i=0}^\infty$, $\{y_i\}_{i=0}^\infty $ be two sequences in $X$ provided that $\calv(\{x_i\}_{i=0}^\infty)\neq\emptyset$. 
If \[
\lim_{n\to\infty} \min_{\sigma\in S_n }\frac{1}{n}\sum_{i=0}^{n-1}d(x_i,y_{\sigma(i)})=0.
\] 
then $\calv(\{x_i\}_{i=0}^\infty)=\calv(\{y_i\}_{i=0}^\infty)$.
\end{lem}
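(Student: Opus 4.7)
The plan is to translate the hypothesis into a statement about the metric $\gamma$ on $\calmx$ and then run a routine triangle-inequality argument. By Lemma~\ref{lem:finite-seq-metric}, for every $n\in\bbn$ one has
\[
\min_{\sigma\in S_n}\frac{1}{n}\sum_{i=0}^{n-1}d(x_i,y_{\sigma(i)})=\gamma\bigl(m(\{x_i\}_{i=0}^\infty,n),\,m(\{y_i\}_{i=0}^\infty,n)\bigr),
\]
so the assumption becomes $\lim_{n\to\infty}\gamma(m(\{x_i\}_{i=0}^\infty,n),m(\{y_i\}_{i=0}^\infty,n))=0$. Since $\gamma$ metrizes the weak$^*$-topology on $\calmx$, this is the exact statement we need.

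First I would prove $\calv(\{x_i\}_{i=0}^\infty)\subset\calv(\{y_i\}_{i=0}^\infty)$. Given $\mu\in\calv(\{x_i\}_{i=0}^\infty)$, pick a subsequence $\{n_k\}$ with $m(\{x_i\}_{i=0}^\infty,n_k)\to\mu$ in $\gamma$. The triangle inequality
\[
\gamma\bigl(m(\{y_i\}_{i=0}^\infty,n_k),\mu\bigr)\leq \gamma\bigl(m(\{y_i\}_{i=0}^\infty,n_k),m(\{x_i\}_{i=0}^\infty,n_k)\bigr)+\gamma\bigl(m(\{x_i\}_{i=0}^\infty,n_k),\mu\bigr)
\]
forces $m(\{y_i\}_{i=0}^\infty,n_k)\to\mu$, so $\mu\in\calv(\{y_i\}_{i=0}^\infty)$. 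In particular this already establishes $\calv(\{y_i\}_{i=0}^\infty)\neq\emptyset$, which is what the asymmetry of the hypothesis gains us.

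For the reverse inclusion the argument is symmetric and does not need a separate nonemptiness assumption: given $\nu\in\calv(\{y_i\}_{i=0}^\infty)$ with $m(\{y_i\}_{i=0}^\infty,n_k)\to\nu$, the same triangle inequality (with the roles of $x$ and $y$ swapped) yields $m(\{x_i\}_{i=0}^\infty,n_k)\to\nu$, and hence $\nu\in\calv(\{x_i\}_{i=0}^\infty)$. There is no serious obstacle here; the only subtlety is remembering that Lemma~\ref{lem:finite-seq-metric} is what lets us pass from the minimum-over-permutations formulation of the hypothesis into a statement about empirical measures in $(\calmx,\gamma)$, after which the result is a one-line application of the triangle inequality.
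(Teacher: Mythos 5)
Your proof is correct and follows essentially the same route as the paper: apply Lemma~\ref{lem:finite-seq-metric} to translate the hypothesis into $\gamma(m(\{x_i\}_{i=0}^\infty,n),m(\{y_i\}_{i=0}^\infty,n))\to 0$, then use the triangle inequality on convergent subsequences. The paper only writes out the inclusion $\calv(\{x_i\}_{i=0}^\infty)\subset\calv(\{y_i\}_{i=0}^\infty)$ and leaves the converse to symmetry, whereas you spell out both; your remark that the one-sided nonemptiness hypothesis is what the symmetry transfers to the $y$-side is a nice observation, though not strictly needed for the inclusion argument itself.
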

\begin{proof}
It is sufficient to prove that $\calv(\{x_i\}_{i=0}^\infty)\subset \calv(\{y_i\}_{i=0}^\infty)$.
Fix $\mu\in  \calv(\{x_i\}_{i=0}^\infty)$.
There exists an increasing sequence $\{n_k\}$ such that 
\[
\lim_{k\to\infty} m(\{x_i\}_{i=0}^\infty,n_k) = \mu.
\]
For every $\eps>0$ there exists $N_1\in\bbn$ such that for any 
$n\geq N_1$, 
\[
\gamma(m(\{x_i\}_{i=0}^\infty,n_k),\mu)<\frac{\eps}{2}.
\]
By the assumption, there exists $N_2\in\bbn$ such that for any 
$n\geq N_2$,  
\[
\min_{\sigma\in S_n }\frac{1}{n}\sum_{i=0}^{n-1}d(x_n,y_{\sigma(n)})<\frac{\eps}{2}.
\]
By Lemma~\ref{lem:finite-seq-metric}, for any 
$n\geq N_2$,  
\[
\gamma(m(\{x_i\}_{i=0}^\infty,n), m(\{y_i\}_{i=0}^\infty,n)<\frac{\eps}{2}.
\]
Let $N=\max\{N_1,N_2\}$. Then for any $n\geq N$, 
\begin{align*}
\gamma(m(\{y_i\}_{i=0}^\infty,n_k),\mu)&\leq \gamma(m(\{y_i\}_{i=0}^\infty,n_k), m(\{x_i\}_{i=0}^\infty,n_k))+\gamma(m(\{x_i\}_{i=0}^\infty,n_k),\mu)\\
&< \frac{\eps}{2}+\frac{\eps}{2}=\eps.
\end{align*}
This shows that 
\[
\lim_{k\to\infty} m(\{y_i\}_{i=0}^\infty,n_k) = \mu,
\]
and then $\mu \in \calv(\{y_i\}_{i=0}^\infty)$.
\end{proof}

\begin{remark}
It should be noticed that the converse of Lemma \ref{lem:lim-0-V} is not true. The section 3.1 of \cite{CKLP2022} demonstrates an example that 
there exist two points $x,y$ in the full shift $(\{0,1\}^\infty,T)$ that $\calv_T(x)=\calv_T(y)$ and 
\[
\limsup_{n\to\infty} \min_{\sigma\in S_n }\frac{1}{n}\sum_{i=0}^{n-1}d(T^ix,T^{\sigma(i)}y)>0.
\] 
The following result shows that 
the converse of Lemma \ref{lem:lim-0-V} can be true under some additional conditions.
\end{remark}

\begin{lem}\label{lem:lim-0-V-eq}
Let $X$ be a compact metric space and $\{x_i\}_{i=0}^\infty$, $\{y_i\}_{i=0}^\infty $ be two sequences in $X$ provided that $\calv(\{x_i\}_{i=0}^\infty)$ is a singleton.
Then   $\calv(\{x_i\}_{i=0}^\infty)=\calv(\{y_i\}_{i=0}^\infty)$ 
if and only if 
\[
\lim_{n\to\infty} \min_{\sigma\in S_n }\frac{1}{n}\sum_{i=0}^{n-1}d(x_i,y_{\sigma(i)})=0.
\] 
\end{lem}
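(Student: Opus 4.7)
The plan is to handle the two directions separately, with the ``if'' direction reducing directly to Lemma~\ref{lem:lim-0-V} and the ``only if'' direction following from the observation that the singleton hypothesis upgrades the precompactness of the empirical measures to actual convergence. For the forward (``if'') direction, since $X$ is compact the space $(\calmx,\gamma)$ is also compact, so both $\calv(\{x_i\}_{i=0}^\infty)$ and $\calv(\{y_i\}_{i=0}^\infty)$ are automatically nonempty. Applying Lemma~\ref{lem:lim-0-V} first as stated, and then with the roles of $\{x_i\}$ and $\{y_i\}$ swapped, yields both inclusions, hence the equality of the two sets.

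For the reverse (``only if'') direction, write $\calv(\{x_i\}_{i=0}^\infty)=\calv(\{y_i\}_{i=0}^\infty)=\{\mu\}$. The standard fact that a sequence in a compact metric space with exactly one accumulation point must converge to that point, applied in $(\calmx,\gamma)$ to each of the sequences $\{m(\{x_i\}_{i=0}^\infty,n)\}_{n=1}^\infty$ and $\{m(\{y_i\}_{i=0}^\infty,n)\}_{n=1}^\infty$, shows that both sequences converge to $\mu$. The triangle inequality then gives $\gamma(m(\{x_i\}_{i=0}^\infty,n),m(\{y_i\}_{i=0}^\infty,n))\to 0$ as $n\to\infty$, and Lemma~\ref{lem:finite-seq-metric} identifies this quantity with $\min_{\sigma\in S_n}\frac{1}{n}\sum_{i=0}^{n-1}d(x_i,y_{\sigma(i)})$, yielding the desired limit.

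I do not anticipate a genuine obstacle. The singleton hypothesis is exactly the ingredient missing from the remark preceding the statement (where the full-shift counterexample has $\calv_T(x)$ non-singleton): once both empirical-measure sequences are forced to converge rather than merely to cluster, the equivalence reduces to a routine triangle-inequality estimate after translation via Lemma~\ref{lem:finite-seq-metric}.
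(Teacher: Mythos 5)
Your proof is correct and matches the paper's argument in both directions: sufficiency via Lemma~\ref{lem:lim-0-V}, and necessity via convergence of both empirical-measure sequences to the common limit $\mu$ in the compact space $(\calmx,\gamma)$, followed by the triangle inequality and Lemma~\ref{lem:finite-seq-metric}. The only minor redundancy is in the sufficiency direction, where a single application of Lemma~\ref{lem:lim-0-V} already yields the set equality (its conclusion is equality, not merely an inclusion), so the second application with roles swapped is not needed.
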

\begin{proof}
The sufficiency  follows from Lemma~\ref{lem:lim-0-V}.
We only need to prove the necessity.
Assume that $\calv(\{x_i\}_{i=0}^\infty) = \{\mu \}$.
As $X$ is a compact metric space, $(\calmx,\gamma)$ is also a compact metric space. Then 
\[
\lim_{n\to\infty} m(\{x_i\}_{i=0}^\infty,n) = \mu=\lim_{n\to\infty} m(\{y_i\}_{i=0}^\infty,n).
\]
For every $\eps>0$ there exists $N\in\bbn$ such that for any 
$n\geq N$, 
\[
\gamma(m(\{x_i\}_{i=0}^\infty,n),\mu)<\frac{\eps}{2},
\]
and 
\[
\gamma(m(\{y_i\}_{i=0}^\infty,n),\mu)<\frac{\eps}{2}.
\]
By Lemma~\ref{lem:finite-seq-metric}, for any 
$n\geq N$,  
\begin{align*}
\min_{\sigma\in S_n }\frac{1}{n}\sum_{i=0}^{n-1}d(x_i,y_{\sigma(i)})&=
\gamma(m(\{x_i\}_{i=0}^\infty,n), m(\{y_i\}_{i=0}^\infty,n))\\
&< \gamma(m(\{x_i\}_{i=0}^\infty,n),\mu) + \gamma(m(\{y_i\}_{i=0}^\infty,n),\mu)\\
&< \frac{\eps}{2}+\frac{\eps}{2}=\eps.
\end{align*}
Then 
\[
\lim_{n\to\infty} \min_{\sigma\in S_n }\frac{1}{n}\sum_{i=0}^{n-1}d(x_i,y_{\sigma(i)})=0. \qedhere 
\] 
\end{proof}

The following result is essentially contained in \cite{Sig1977}*{Remark 1}
for compact dynamical system. Here we provide a proof for complement.

\begin{prop} \label{prop:V-AAPO}
Let $(X,T)$ be a Polish dynamical system.
Then for every non-empty compact connected  subset $V$ of $\calm_T(X)$,
there exists an asymptotic average pseudo-orbit $\{x_i\}_{i=0}^\infty$ for  $T$ such that $\calv(\{x_i\}_{i=0}^\infty )=V$.
\end{prop}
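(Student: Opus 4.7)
The plan is to construct $\{x_i\}_{i=0}^{\infty}$ as the concatenation of finite blocks $B_1,B_2,\ldots$, each block itself a pseudo-orbit of controlled length whose empirical measure approximates a prescribed invariant measure $\nu_n\in V$; the $\nu_n$'s will form a sequence walking continuously through $V$ and dense in $V$, and the block lengths will grow so fast that each block dominates all of its predecessors. This forces the empirical measures to sweep past every element of the dense sequence while remaining close to $V$, so that $\calv(\{x_i\}_{i=0}^{\infty})=V$.

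To build the ingredients, I would first choose a sequence $\{\nu_n\}_{n=1}^{\infty}\subset V$ which is dense in $V$ and satisfies $\gamma(\nu_n,\nu_{n+1})<1/n$; such a sequence exists in any non-empty compact connected metric space, obtained by stringing together finite $\tfrac{1}{n}$-chains inside $V$. For each $\nu_n$, I would invoke the ergodic decomposition for Polish systems (\cite{Oxt1952}*{Section 8}) to approximate $\nu_n$ within $\gamma$-distance $1/n$ by a finite convex combination $\sum_{j}\lambda_{n,j}\mu^{(n,j)}$ of ergodic measures with rational weights, and pick a Birkhoff-generic point $y_{n,j}$ for each $\mu^{(n,j)}$. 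The block $B_n$ is then formed by repeating many short ``rounds'' in which orbit sub-segments of the $y_{n,j}$'s appear in proportions $\lambda_{n,j}$. Designed this way, there is a threshold $\tau_n$ such that every prefix of $B_n$ of length $\geq \tau_n$ has empirical measure within $O(1/n)$ of $\nu_n$; denote the total length of $B_n$ by $L_n$ and the number of jumps (transitions between consecutive orbit sub-segments) inside $B_n$ by $J_n$.

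Setting $N_n=L_1+\cdots+L_n$, I would then choose the $L_n$'s recursively so that $L_n\geq nN_{n-1}$, $N_{n-1}\geq n\tau_n$, and $\sum_{j\leq n}(J_j+1)\leq N_n/n$; each requirement is met by taking the $L_n$'s sufficiently large, since $\tau_n$ and $J_n$ are fixed by the prior construction of $B_n$. Concatenating $B_1,B_2,\ldots$ yields $\{x_i\}_{i=0}^{\infty}$. Since $d(Tx_i,x_{i+1})=0$ except at the $\sum_{j\leq n}(J_j+1)$ jump positions occurring within $N_n$, we have $\sum_{i=0}^{N-1}d(Tx_i,x_{i+1})\leq \diam(X)\cdot\sum_{j\leq n}(J_j+1)=o(N)$, giving the asymptotic average pseudo-orbit condition.

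Finally, I would verify $\calv(\{x_i\}_{i=0}^{\infty})=V$. For $V\subset\calv(\{x_i\}_{i=0}^{\infty})$: at time $N_n$, the empirical is within $O(1/n)$ of the empirical of $B_n$ alone (because $N_{n-1}/N_n\leq 1/n$), hence within $O(1/n)$ of $\nu_n$; density of $\{\nu_n\}$ in $V$ and closedness of $\calv(\{x_i\}_{i=0}^{\infty})$ yield $V\subset\calv(\{x_i\}_{i=0}^{\infty})$. Conversely, for $N_{n-1}<N\leq N_n$ write $N=N_{n-1}+r$ and decompose $m(\{x_i\}_{i=0}^{\infty},N)=\tfrac{N_{n-1}}{N}m(\{x_i\}_{i=0}^{\infty},N_{n-1})+\tfrac{r}{N}m(B_n,r)$, where $m(B_n,r)$ is the empirical of the first $r$ entries of $B_n$. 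If $r\geq\tau_n$, the second summand is $O(1/n)$-close to $\nu_n$ and (by the previous case applied inductively at $N_{n-1}$) the first to $\nu_{n-1}$, so the whole measure lies $O(1/n)$-close to a convex combination of $\nu_{n-1}$ and $\nu_n$, which in turn is within $\gamma(\nu_{n-1},\nu_n)<1/(n-1)$ of $V$. If $r<\tau_n$, then $r/N\leq\tau_n/N_{n-1}\leq 1/n$ makes the second summand negligible, so $m(\{x_i\}_{i=0}^{\infty},N)$ is within $O(1/n)$ of $m(\{x_i\}_{i=0}^{\infty},N_{n-1})$, hence close to $\nu_{n-1}\in V$. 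Either way the distance from $m(\{x_i\}_{i=0}^{\infty},N)$ to $V$ tends to $0$, forcing $\calv(\{x_i\}_{i=0}^{\infty})\subset V$. The main obstacle is the fine interleaving in the block construction: for non-ergodic $\nu_n$ one must partition $B_n$ into many small rounds so that every prefix of length $\geq\tau_n$ approximates $\nu_n$ uniformly, while keeping $J_n$ controllable relative to the future block lengths.
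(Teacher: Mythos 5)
Your overall strategy coincides with the paper's: take a dense sequence $\{\nu_n\}$ in $V$ with $\gamma(\nu_n,\nu_{n+1})\to 0$, approximate each $\nu_n$ by a finite mixture of ergodic measures via the ergodic decomposition, pick Birkhoff-generic points, concatenate blocks of interleaved orbit segments with rapidly growing lengths, and verify both the pseudo-orbit condition and $\calv=V$ by the decomposition $m(\cdot,N)=\tfrac{N_{n-1}}{N}m(\cdot,N_{n-1})+\tfrac{r}{N}m(B_n,r)$. The verification of $\calv(\{x_i\})=V$ is fine. However, the verification of the asymptotic average pseudo-orbit condition has a genuine gap.

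You bound the number of nonzero terms $d(Tx_i,x_{i+1})$ with $i<N$ (where $N_{n-1}<N\le N_n$) by $\sum_{j\le n}(J_j+1)$, the \emph{total} number of jumps up to $N_n$, and then assert this is $o(N)$. That assertion is false near the left endpoint: for $N$ just past $N_{n-1}$, the sum already contains all $J_n$ jumps of $B_n$, and $J_n$ scales with $L_n\gg N_{n-1}$, so $\sum_{j\le n}(J_j+1)$ can vastly exceed $N$. Your hypothesis $\sum_{j\le n}(J_j+1)\le N_n/n$ does not save this, since $N_n\approx L_n$ is much larger than $N$. The correct estimate must count only those jumps of $B_n$ lying before time $N$: with the round structure, these are about $(J_n/L_n)\cdot(N-N_{n-1})$ plus the jumps of one partial round, and the density $J_n/L_n$ is controlled by the segment length $q_n$, not by $L_n$. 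This is exactly what the paper does when it bounds the jump count at time $N=Q_{n-1}+kp_nq_n+jq_n+r$ by $\sum_{i<n}p_ip_{i+1}q_{i+1}+(k+1)p_n$, counting only the $k$ completed rounds plus the current partial one, and then observes $\frac{(k+1)p_n}{Q_{n-1}+kp_nq_n}\le \max\bigl\{\frac{p_n}{Q_{n-1}},\frac{2}{q_n}\bigr\}\to 0$.

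Relatedly, the sentence ``each requirement is met by taking the $L_n$'s sufficiently large, since $\tau_n$ and $J_n$ are fixed by the prior construction of $B_n$'' is internally inconsistent: $L_n$ \emph{is} the length of $B_n$, so it is not a free parameter once $B_n$ is built, and adding rounds to enlarge $L_n$ proportionally enlarges $J_n$ as well. The two genuine knobs are the orbit-segment length $q_n$ (which drives the jump density $J_n/L_n$ down, independently of how many rounds you use) and the number of rounds (which drives $L_n/N_{n-1}$ up while keeping the density fixed). Once you separate these two parameters and replace the crude jump bound by the per-prefix count, the argument closes and matches the paper's.
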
 
\begin{proof}
As $d$ is a bounded metric on $X$, without loss of generality assume that the diameter
of $X$ is $1$.
Since $V$ is a non-empty compact connected subset of $\calm_T(X)$,
it is easy to find a dense sequence $\{\mu_n\}_{n=1}^\infty$ in $V$ 
such that
$\gamma(\mu_n,\mu_{n+1})\to 0$ as $n\to\infty$.
As $V$ has no isolated points, $V$ is the collection of limits of convergent subsequences of $\{\mu_n\}_{n=1}^\infty$.
Let $\eps_n= \gamma(\mu_n,\mu_{n+1})$ for all $n\in\bbn$.
For each $n\in\bbn$, by the ergodic decomposition,  
there exists a  large enough positive integer $p_n$ and ergodic measures $\mu^n_1,\mu^n_2,\dotsc,\mu^n_{p_n}$ such that 
\[  
\gamma\biggl(\mu_n, \frac{1}{p_n}\sum_{j=1}^{p_n} \mu^n_j \biggr)<\eps_n.
\]
For each $j=1,2,\dotsc,p_n$, as $\mu^n_j$ is ergodic, pick a generic point $y^n_j$ for $\mu^n_j$.
Then there exists a  large enough positive integer $q_n$ such that 
\[
\gamma\bigl(\mu^n_j, m_T(y^n_j,q_n)\bigr)<\eps_n
\]
for $j=1,2,\dotsc,p_n$. We require that $p_{n+1}\geq 2^n p_nq_n$ and $q_{n+1}\geq 2p_{n+1}$ for all $n\in\bbn$.

Let $Q_0=0$, $P_n=p_n q_n p_{n+1}q_{n+1}$ and $Q_n=\sum_{i=1}^n P_i$ for all $n\in\bbn$.
For each $i\in\bbn$, there exists a unique $n\in\bbn$ such that 
$i=Q_{n-1} + k p_nq_n+ j q_n +r$ with $k\in [0,p_{n+1}q_{n+1})$, 
$j\in [0,p_n-1)$ and $r\in [0,q_n)$ and define  
$x_i=T^r y^n_{j+1}$.

For each $N\in\bbn$, there exists a unique $n\in\bbn$ such that $N=Q_{n-1} + k p_nq_n+ j q_n +r$ with $k\in [0,p_{n+1}q_{n+1})$, 
$j\in [0,p_n-1)$, and $r\in [0,q_n)$.
Then 
\begin{align*}
\frac{1}{N} \sum_{i=0}^{N-1}d(Tx_i,x_{i+1}) & \leq 
\frac{1}{N} \biggl(\sum_{i=1}^{n-1} p_i p_{i+1}q_{i+1}+(k+1) p_n\biggr)  \\
&\leq \frac{1}{Q_{n-1} }\sum_{i=1}^{n-1} p_i p_{i+1}q_{i+1} +\frac{(k+1) p_n}{Q_{n-1}+kp_nq_n} \to 0
\end{align*}
as $N\to\infty$.
This shows that 
$\{x_i\}_{i=0}^\infty$ is an asymptotic average pseudo-orbit.

By the construction of $\{x_i\}$, for every $n\in\bbn$ and $k\in [0,p_{n+1}q_{n+1})$, one has 
\[
\frac{1}{p_n} \sum_{j=1}^{p_n} m_T(y^n_j,q_n)
= \frac{1}{k p_n q_n} \sum_{i=Q_{n-1}}^{Q_{n-1}+ k p_n q_n-1}  \delta_{x_i}.
\]
Then by the definition of the metric $\gamma$,  for each $n\in\bbn$, 
\begin{align*}
\gamma(\mu_n,m(\{x_i\}_{i=0}^\infty,Q_n))
&\leq \gamma\biggl(\mu_n, \frac{1}{p_n} \sum_{j=1}^{p_n} m_T(y^n_j,q_n)\biggr) +
\gamma \biggl(
\frac{1}{p_n} \sum_{j=1}^{p_n} m_T(y^n_j,q_n),
m(\{x_i\}_{i=0}^\infty,Q_n)
\biggr)\\
&<\gamma\biggl(\mu_n, \frac{1}{p_n}\sum_{j=1}^{p_n} \mu^n_j \biggr)+ \gamma\biggl(\frac{1}{p_n}\sum_{j=1}^{p_n} \mu^n_j, \frac{1}{p_n} \sum_{j=1}^{p_n} m_T(y^n_j,q_n)\biggr)\\
&\qquad \qquad \qquad  +\gamma \biggl(
\frac{1}{p_n} \sum_{j=1}^{p_n} m_T(y^n_j,q_n),
m(\{x_i\}_{i=0}^\infty,Q_n)
\biggr)\\
&< 2\eps_n + \frac{2Q_{n-1}}{Q_n} \to 0
\end{align*}
as $n\to\infty$.
This shows that $V\subset \calv(\{x_i\}_{i=0}^\infty)$.
 
Fix $\nu \in \calv(\{x_i\}_{i=0}^\infty)$.
There exists an increasing sequence $\{t_\ell \}$ such that 
\[ 
\lim_{\ell\to\infty}m(\{x_i\}_{i=0}^\infty,t_\ell)=\nu.
\]
For each $t_\ell$, there exists a unique $n_\ell\in\bbn$ such that 
\[
t_\ell=Q_{n_\ell-1} + k_\ell p_{n_\ell} q_{n_\ell}+ j_\ell q_{n_\ell} +r_\ell
\]
with $k_\ell \in [0,p_{n_\ell+1}q_{n_\ell+1})$, 
$j_\ell \in [0,p_{n_\ell}-1)$ and $r_\ell \in [0,q_{n_\ell})$.
As $\{\mu_n\}_{n=1}^\infty$ is a sequence in a compact set $V$,
without loss of generality, assume that the subsequence 
$\{\mu_{n_\ell-1}\}$ converges to $\nu'$.
Let $\alpha_\ell = \frac{t_\ell - Q_{n_\ell-1}}{t_\ell}$. Then
\begin{align*}
&\alpha_\ell 
\gamma\biggl(\frac{1}{t_\ell - Q_{n_\ell-1}}\sum_{i=Q_{n_\ell-1}}^{t_\ell-1}\delta_{x_i},\mu_{n_\ell}\biggr)\\
&\quad \leq 
\alpha_\ell \gamma\biggl(
\frac{1}{t_\ell - Q_{n_\ell-1}}\sum_{i= Q_{n_\ell-1}}^{t_\ell-1}\delta_{x_i}, 
\frac{1}{p_{n_\ell}} \sum_{j=1}^{p_{n_\ell}} m_T(y^{n_\ell}_j,q_{n_\ell})\biggl) + 
\alpha_\ell 
\gamma\biggl(\frac{1}{p_{n_\ell}} \sum_{j=1}^{p_{n_\ell}} m_T(y^{n_\ell}_j,q_{n_\ell}), \mu_{n_\ell}\biggr)\\ 
&\quad \leq \frac{2p_{n_\ell} q_{n_\ell}+1}{Q_{n_\ell-1}}+ 2\eps_{n_\ell}
\to 0
\end{align*}
as $\ell\to\infty$ 
and 
\begin{align*}
    &\gamma(m(\{x_i\}_{i=0}^\infty,t_\ell),
    (1-\alpha_\ell) \mu_{n_\ell-1}+\alpha_\ell\mu_{n_\ell})\\
    &\quad  
    =  \gamma\biggl((1-\alpha_\ell )m(\{x_i\}_{i=0}^\infty,Q_{n_\ell-1})+
    \alpha_\ell \frac{1}{t_\ell - Q_{n_\ell-1}}
    \sum_{i= Q_{n_\ell-1}}^{t_\ell-1}\delta_{x_i},
    (1-\alpha_\ell) \mu_{n_\ell-1}+\alpha_\ell\mu_{n_\ell}\biggr)\\
    &\quad \leq 
    (1-\alpha)\gamma(m(\{x_i\}_{i=0}^\infty,Q_{n_\ell-1}),  \mu_{n_\ell-1})
    +\alpha_\ell \gamma\biggl(\frac{1}{t_\ell - Q_{n_\ell-1}}\sum_{i=Q_{n_\ell-1}}^{t_\ell-1}\delta_{x_i},\mu_{n_\ell}\biggr)
    \to 0
\end{align*}
as $\ell\to\infty$. 
On the other hand, $\gamma((1-\alpha_\ell) \mu_{n_\ell-1}+\alpha_\ell\mu_{n_\ell},\mu_{n_\ell-1}) \leq \gamma(\mu_{n_\ell},\mu_{n_\ell-1})\to 0$ as $\ell\to \infty$.
This shows that $\nu=\nu'$ and then $ \calv(\{x_i\}_{i=0}^\infty)\subset V$.
\end{proof}

We say that a Polish dynamical system $(X,T)$  has the asymptotic orbital average shadowing property if  for every asymptotic average pseudo-orbit  $\{x_i\}_{i=0}^\infty$ of $X$,  there exists $x\in X$ such that
\[
\lim_{n\to\infty} \min_{\sigma\in S_n }\frac{1}{n}\sum_{i=0}^{n-1}d(x_i,T^{\sigma(i)}x)=0.
\]

Now we are ready to prove Theorem~\ref{thm:AOASP-V}.

\begin{proof}[Proof of Theorem~\ref{thm:AOASP-V}]
Let $V$ be a non-empty compact connected subset of $\calm_T(X)$.
By Proposition \ref{prop:V-AAPO}, there exists an asymptotic average pseudo-orbit $\{x_i\}_{i=0}^\infty$ for  $T$ such that $\calv(\{x_i\}_{i=0}^\infty )=V$.
Since $(X,T)$ has the asymptotic orbital average shadowing property,   there exists some $x\in X$ such that 
\[
\lim_{n\to \infty}\min_{\sigma\in S_n}\frac{1}{n}\sum_{i=0}^{n-1}d(x_i,T^{\sigma(i)}x)=0.
\]
By Lemma \ref{lem:lim-0-V}, $\calv_T(x)= \calv(\{x_i\}_{i=0}^\infty )$.
Then $\calv_T(x)=V$. 
\end{proof}

\begin{cor}[\cite{DTY2015}*{Theorem 1.4} and \cite{KLO2017}*{Theorem 21}]
If a  compact dynamical system $(X,T)$ has the asymptotic average shadowing property, then for every non-empty closed connected subset $V$ of $\calm_T(X)$, there exists $x\in X$ with $\calv_T(x)=V$.
\end{cor}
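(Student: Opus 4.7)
The plan is to assemble this theorem directly from two pieces of machinery already built in the excerpt: Proposition~\ref{prop:V-AAPO}, which realizes every non-empty compact connected $V\subset\calm_T(X)$ as the set of quasi-generic measures of some asymptotic average pseudo-orbit, and Lemma~\ref{lem:lim-0-V}, which says that mean-orbital closeness of two sequences (one with non-empty quasi-generic set) forces them to have the same set of quasi-generic measures.

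Concretely, given $V$ I would first apply Proposition~\ref{prop:V-AAPO} to produce a sequence $\{x_i\}_{i=0}^\infty$ in $X$ that is an asymptotic average pseudo-orbit for $T$ and satisfies $\calv(\{x_i\}_{i=0}^\infty)=V$. Because $V$ is non-empty, so is $\calv(\{x_i\}_{i=0}^\infty)$, which verifies the side hypothesis needed below. Invoking the asymptotic orbital average shadowing property on this pseudo-orbit yields a point $x\in X$ with
\[
\lim_{n\to\infty}\min_{\sigma\in S_n}\frac{1}{n}\sum_{i=0}^{n-1}d(x_i,T^{\sigma(i)}x)=0.
\]
This is exactly the hypothesis of Lemma~\ref{lem:lim-0-V} applied to the pair $\{x_i\}$ and $\{T^i x\}$, so the lemma gives $\calv(\{x_i\}_{i=0}^\infty)=\calv_T(x)$, and therefore $\calv_T(x)=V$, as required.

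Since all the substantive work has been packed into the earlier propositions, there is essentially no obstacle left at the level of Theorem~\ref{thm:AOASP-V} itself; the proof is a matter of chaining three facts in the correct order. The genuinely hard part lives upstream, in Proposition~\ref{prop:V-AAPO}, where one must construct a pseudo-orbit whose empirical distributions traverse exactly the prescribed compact connected $V$ by concatenating long blocks of generic points for ergodic components, with block lengths growing fast enough to kill the one-step errors $d(Tx_i,x_{i+1})$ in Cesàro average yet slowly enough that intermediate empirical measures stay close to a dense sequence in $V$. The asymptotic orbital average shadowing hypothesis then supplies the single point whose orbit reproduces this behaviour in the mean orbital pseudo-metric, and Lemma~\ref{lem:lim-0-V} is the transfer principle that turns the two pieces into the stated conclusion.
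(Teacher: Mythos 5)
Your proposal reproduces, essentially verbatim, the proof of Theorem~\ref{thm:AOASP-V} rather than deducing the corollary from it. That would be fine, except that at the key step you write ``invoking the asymptotic \emph{orbital} average shadowing property on this pseudo-orbit yields a point $x\in X$ with $\lim_{n}\min_{\sigma\in S_n}\frac{1}{n}\sum_i d(x_i,T^{\sigma(i)}x)=0$'' --- but the corollary's hypothesis is the asymptotic average shadowing property (AASP), not the orbital version (AOASP). These are not the same hypothesis: AASP produces a point $x$ with $\lim_n\frac{1}{n}\sum_i d(x_i,T^i x)=0$ (no permutations), and AOASP is defined in the paper precisely as the \emph{a priori weaker} requirement where one may permute the orbit.

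The gap is easy to close and is exactly what makes this a corollary rather than a new theorem: for every $n$ and every $x$ one has
\[
\min_{\sigma\in S_n}\frac{1}{n}\sum_{i=0}^{n-1}d(x_i,T^{\sigma(i)}x)\le\frac{1}{n}\sum_{i=0}^{n-1}d(x_i,T^{i}x),
\]
taking $\sigma$ to be the identity, so AASP implies AOASP. Having made that observation, you should simply cite Theorem~\ref{thm:AOASP-V} directly rather than re-run its proof. You should also note in passing that, since $X$ is compact, $\calm(X)$ is weak$^*$-compact, so a closed subset of $\calm_T(X)$ is automatically compact; this reconciles the ``closed'' in the corollary with the ``compact'' in Theorem~\ref{thm:AOASP-V}.
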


\begin{prop} \label{prop:per-decomp-AOASP}
Let $(X,T)$ be a Polish dynamical system with $T$ being uniformly continuous.
Assume that $\{D_0,D_1,\dotsc,D_{k-1}\}$ is a periodic decomposition. 
If $(D_0,T^k)$ has the asymptotic orbital average
shadowing property then for every non-empty compact connected  subset $V$ of $\calm_T(X)$, there exists $x\in X$ with $\calv_T(x)=V$.  
\end{prop}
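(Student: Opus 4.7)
The plan is to reduce the statement to Theorem~\ref{thm:AOASP-V} applied to $(D_0, T^k)$, via the natural correspondence between $T$-invariant probability measures on $X$ and $T^k$-invariant probability measures on $D_0$. First I would introduce the affine map
\[
\Phi \colon \calm_{T^k}(D_0) \longrightarrow \calm_T(X), \qquad \Phi(\nu) = \frac{1}{k}\sum_{j=0}^{k-1} T^j_* \nu.
\]
Routine checks show that $\Phi(\nu)$ is $T$-invariant (since $T_*\Phi(\nu) = \Phi(\nu)$, using $T^k_*\nu=\nu$) and that $\Phi$ is continuous in the weak* topology, because the uniform continuity of $T$ makes each $T^j_*$ continuous with respect to $\gamma$. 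The crucial additional fact, proved along the lines of Proposition~\ref{prop:per-decomp}, is that $\Phi$ is a homeomorphism from $\calm_{T^k}(D_0)$ onto $\calm_T(X)$ with inverse $\mu \mapsto k\cdot \mu|_{D_0}$: the periodic decomposition together with $T$-invariance forces $\mu(D_0)=\dotsb=\mu(D_{k-1})$ and $T^j_*(\mu|_{D_0}) = \mu|_{D_j}$, so $\mu = \Phi(k\cdot\mu|_{D_0})$.

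Given a non-empty compact connected subset $V$ of $\calm_T(X)$, the set $V' := \Phi^{-1}(V)$ is then a non-empty compact connected subset of $\calm_{T^k}(D_0)$. Since $(D_0, T^k)$ has the asymptotic orbital average shadowing property by hypothesis, Theorem~\ref{thm:AOASP-V} produces a point $x \in D_0$ with $\calv_{T^k}(x) = V'$.

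To conclude $\calv_T(x) = V$, I would use the identity
\[
m_T(x, kN) = \frac{1}{k}\sum_{j=0}^{k-1} T^j_* m_{T^k}(x,N) = \Phi\bigl(m_{T^k}(x,N)\bigr),\qquad N\in\bbn.
\]
For general $n = kN+r$ with $0 \le r < k$, the measures $m_T(x,n)$ and $m_T(x,kN)$ differ by at most $2k/n$ in total variation, so $\{m_T(x,n)\}_{n\geq 1}$ and $\{m_T(x,kN)\}_{N\geq 1}$ share their cluster points; combined with the continuity of $\Phi$, this gives $\calv_T(x) = \Phi(\calv_{T^k}(x)) = \Phi(V') = V$.

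The main obstacle I anticipate is establishing that $\Phi$ is a homeomorphism — in particular its injectivity and the well-definedness of its inverse — which depends on showing that any overlaps among the closed sets $D_0,\dotsc,D_{k-1}$ carry no mass for $T$-invariant measures. Everything else is either a direct computation or a standard continuity argument.
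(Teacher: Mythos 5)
Your strategy is genuinely different from the paper's, and it has a gap that you yourself flagged: the claim that $\Phi$ is a homeomorphism with inverse $\mu\mapsto k\cdot\mu|_{D_0}$. This claim fails in the stated generality, and the sufficient condition you propose (the overlaps $D_i\cap D_j$ carry no mass for every $T$-invariant measure) is simply false. The paper's own application is a counterexample: for a transitive but not weakly mixing interval map one takes $D_0=[0,c]$, $D_1=[c,1]$, and $c$ is forced to be a fixed point, so $\delta_c$ is $T$-invariant and $\delta_c(D_0\cap D_1)=1$. In that case $\mu(D_0)=1\ne 1/k$, so $k\cdot\mu|_{D_0}$ is not even a probability measure. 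More generally one can build periodic decompositions (e.g.\ a $3$-cycle $a\to b\to c\to a$ with $D_0=\{a,b\}$, $D_1=\{b,c\}$, $D_2=\{c,a\}$) where $\Phi$ is \emph{not injective at all} — there $\Phi$ is the constant map onto the unique invariant measure. Beyond injectivity, the candidate inverse $\mu\mapsto\mu|_{D_0}/\mu(D_0)$ is not weak$^*$-continuous (indicator functions of closed sets are not continuous test functions, and $\mu_n(D_0)$ need not converge to $\mu(D_0)$), so even when $\Phi$ happens to be a bijection you would still owe a properness/closedness argument to get compactness of $V'=\Phi^{-1}(V)$.

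The paper avoids all of this by never passing through a measure-level correspondence. It takes the compact connected $V\subset\calm_T(X)$, invokes Proposition~\ref{prop:V-AAPO} to produce an asymptotic average pseudo-orbit $\{x_i\}$ with $\calv(\{x_i\})=V$, and then observes that in that construction one may choose the generic points $y_j^n$ inside $D_0$ and the block lengths $q_n$ to be multiples of $k$. The subsampled sequence $\{x_{ki}\}$ is then an asymptotic average pseudo-orbit for $(D_0,T^k)$; AOASP of $(D_0,T^k)$ yields a tracing point $x\in D_0$, and uniform continuity of $T$ upgrades the $T^k$-tracing on the subsequence to $T$-tracing of the full sequence, after which Lemma~\ref{lem:lim-0-V} gives $\calv_T(x)=V$. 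In short: the paper transports the pseudo-orbit, not the measures, which sidesteps the overlap issue entirely. If you want to salvage your approach, you would need to add a hypothesis ensuring the $D_i$ are essentially disjoint for every invariant measure and separately prove the properness of $\Phi$ — at which point the pseudo-orbit argument is shorter and more robust.
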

\begin{proof}
Let $V$ be a non-empty compact connected subset of $\calm_T(X)$.
By Proposition \ref{prop:V-AAPO}, there exists an asymptotic average pseudo-orbit $\{x_i\}_{i=0}^\infty$ for  $T$ such that $\calv(\{x_i\}_{i=0}^\infty )=V$. 
Moreover in the construction of $\{x_i\}_{i=0}^\infty$ in the proof of   Proposition \ref{prop:V-AAPO}, we can require that each $y_j^n\in D_0$ and $q_n$ is a multiple of $k$. 
Then $\{x_{ki}\}_{i=0}^\infty$ is an asymptotic average pseudo-orbit for $(D_0,T^k)$.  
Since $(D_0,T^k)$ has the asymptotic orbital average shadowing property,   there exists some $x\in D_0$ such that 
\[
\lim_{n\to \infty}\min_{\sigma\in S_n}\frac{1}{n}\sum_{i=0}^{n-1}d(x_{kn},(T^k)^{\sigma(i)}x)=0.
\]
By the construction of $\{x_i\}_{i=0}^\infty$ and the uniform continuity of $T$, we have 
\[
\lim_{n\to \infty}\min_{\sigma\in S_n}\frac{1}{n}\sum_{i=0}^{n-1}d(x_{n},T^{\sigma(i)}x)=0.
\]
By Lemma \ref{lem:lim-0-V}, $\calv_T(x)= \calv(\{x_i\}_{i=0}^\infty )$.
Then $\calv_T(x)=V$. 
\end{proof}

\begin{cor}
Let $f\colon [0, 1] \to [0, 1]$ be a continuous map. If $f$ is transitive then  for every non-empty closed connected subset $V$ of $\calm_f([0,1])$, there exists $x\in X$ with $\calv_f(x)=V$.
\end{cor}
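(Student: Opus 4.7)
The plan is to mirror the blueprint of the preceding corollary on denseness of periodic measures for transitive interval maps, splitting into two cases according to whether $f$ is weakly mixing. In both cases the goal is to reduce to Theorem~\ref{thm:AOASP-V}, possibly after passing through the periodic decomposition machinery encoded in Proposition~\ref{prop:per-decomp-AOASP}.

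First, suppose $f$ is weakly mixing. By \cite{Rue2017}*{Theorem 3.4} $f$ then has the periodic specification property. A standard argument (essentially the one in \cite{Gu2007}) shows that periodic specification implies the asymptotic average shadowing property: given an asymptotic average pseudo-orbit $\{x_i\}_{i=0}^\infty$, partition $\bbn$ into consecutive blocks of rapidly growing length, use specification to find, for each block, an orbit segment that $\eps_k$-shadows the pseudo-orbit on that block, and then use specification once more to chain these segments into a single orbit $\{T^ix\}_{i=0}^\infty$ whose Cesàro deviation from $\{x_i\}_{i=0}^\infty$ tends to zero. Since taking $\sigma$ to be the identity in the definition gives $\min_{\sigma\in S_n}\frac{1}{n}\sum_{i=0}^{n-1}d(x_i,T^{\sigma(i)}x)\leq \frac{1}{n}\sum_{i=0}^{n-1}d(x_i,T^ix)$, the asymptotic average shadowing property immediately implies the asymptotic orbital average shadowing property, so Theorem~\ref{thm:AOASP-V} applies and yields the conclusion in this case.

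Second, suppose $f$ is transitive but not weakly mixing. By \cite{Rue2017}*{Theorem 2.19} there exists $c\in(0,1)$ such that $\{[0,c],[c,1]\}$ is a periodic decomposition for $f$ and $([0,c],f^2)$ is weakly mixing. Applying the first case to $f^2$ restricted to $[0,c]$ shows that $([0,c],f^2)$ has the asymptotic orbital average shadowing property. Since $[0,1]$ is compact, $f$ is uniformly continuous, so Proposition~\ref{prop:per-decomp-AOASP} applies and produces, for each non-empty compact connected subset $V$ of $\calm_f([0,1])$, a point $x\in [0,1]$ with $\calv_f(x)=V$. Because $[0,1]$ is compact, $\calm_f([0,1])$ is compact and metrizable, so closed connected subsets coincide with compact connected subsets, matching the statement of the corollary.

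The main obstacle is the implication \emph{periodic specification $\Rightarrow$ asymptotic average shadowing property}, which is folklore and present in \cite{Gu2007} and subsequent literature but is not explicitly recorded in the preceding sections of this paper; it should either be cited or sketched along the lines above. Every other step is an immediate assembly of the structural dichotomy for transitive interval maps from \cite{Rue2017} with Theorem~\ref{thm:AOASP-V} and Proposition~\ref{prop:per-decomp-AOASP}.
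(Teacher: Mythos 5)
Your proof is correct and follows essentially the same two-case dichotomy as the paper: weakly mixing gives periodic specification, hence asymptotic average shadowing, hence (trivially) asymptotic orbital average shadowing, so Theorem~\ref{thm:AOASP-V} applies; the non-weakly-mixing case is handled via the periodic decomposition and Proposition~\ref{prop:per-decomp-AOASP}. The one step you flag as a gap --- periodic specification implies the asymptotic average shadowing property --- is not folklore that needs a sketch; the paper simply cites \cite{KO2010}*{Theorem 3.8} for it (not \cite{Gu2007}, which introduces the property but does not prove this implication), so the correct move is to replace your sketch and attribution with that citation.
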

\begin{proof}
If $f$ is weakly mixing, then by \cite{Rue2017}*{Theorem 3.4} 
$f$ has the periodic specification property. 
By \cite{KO2010}*{Theorem 3.8}, $f$ has the asymptotic average shadowing property.
Hence for every non-empty closed connected subset $V$ of $\calm_f([0,1])$, there exists $x\in X$ with $\calv_f(x)=V$.
If $f$ is transitive but not weakly mixing, then by \cite{Rue2017}*{Theorem 2.19} there exists $c\in (0,1)$
such that $\{[0,c],[c,1]\}$ is a periodic decomposition.
Moreover $([0,c],f^2)$ is weakly mixing.
Now the result follows from Proposition~\ref{prop:per-decomp-AOASP}.
\end{proof}

We also have the following equivalent condition for when every non-empty compact connected subset of the space of invariant measures has a generic point for compact dynamical systems, but it seems not easy to be verified.

\begin{prop}
Let $(X,T)$ be a compact dynamical system. Then the following two statements are equivalent:
\begin{enumerate}
    \item for every non-empty compact connected  subset $V$ of $\calm_T(X)$, there exists $x\in X$ with $\calv_T(x)=V$;
    \item for every asymptotic average pseudo-orbit  $\{x_i\}_{i=0}^\infty$ of $X$, 
    there exists a asymptotic average pseudo-orbit  $\{y_i\}_{i=0}^\infty$ of $X$ and a point $x\in X$  such that  $\calv(\{x_i\}_{i=0}^\infty ) = \calv(\{y_i\}_{i=0}^\infty )$ and
\[
\lim_{n\to\infty} \min_{\sigma\in S_n }\frac{1}{n}\sum_{i=0}^{n-1}d(y_i,T^{\sigma(i)}x)=0.
\]
\end{enumerate}
\end{prop}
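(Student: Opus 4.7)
The plan is to prove the two implications separately, relying on Proposition~\ref{prop:V-AAPO} and Lemma~\ref{lem:lim-0-V} which together almost immediately yield the equivalence.

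For the direction (1) $\Rightarrow$ (2), given any asymptotic average pseudo-orbit $\{x_i\}_{i=0}^\infty$, I would first verify that $\calv(\{x_i\}_{i=0}^\infty)$ is a non-empty compact connected subset of $\calm_T(X)$: non-emptiness comes from compactness of $(\calmx,\gamma)$; connectedness was observed before Lemma~\ref{lem:lim-0-V}; and $T$-invariance of any weak-$*$ limit of $m(\{x_i\},n_k)$ follows from the asymptotic average pseudo-orbit condition by the standard computation
\[
\Bigl|\int f\circ T\,\du m(\{x_i\},n)-\int f\,\du m(\{x_i\},n)\Bigr|\leq p_L(f)\cdot\frac{1}{n}\sum_{i=0}^{n-1}d(Tx_i,x_{i+1})+\frac{|f(x_n)-f(x_0)|}{n}
\]
for Lipschitz $f$, together with density of Lipschitz functions in $C_b(X)$. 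Then (1) gives a point $x\in X$ with $\calv_T(x)=\calv(\{x_i\}_{i=0}^\infty)$, and I would set $y_i:=T^ix$. The sequence $\{y_i\}_{i=0}^\infty$ is trivially an asymptotic average pseudo-orbit since $d(Ty_i,y_{i+1})=0$, and one has $\calv(\{y_i\}_{i=0}^\infty)=\calv_T(x)=\calv(\{x_i\}_{i=0}^\infty)$. Finally the limit condition is satisfied trivially by taking $\sigma$ to be the identity permutation, since then $d(y_i,T^{\sigma(i)}x)=d(T^ix,T^ix)=0$.

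For the direction (2) $\Rightarrow$ (1), given a non-empty compact connected subset $V$ of $\calm_T(X)$, I would invoke Proposition~\ref{prop:V-AAPO} to produce an asymptotic average pseudo-orbit $\{x_i\}_{i=0}^\infty$ with $\calv(\{x_i\}_{i=0}^\infty)=V$. Applying (2) to this sequence yields an asymptotic average pseudo-orbit $\{y_i\}_{i=0}^\infty$ and a point $x\in X$ with $\calv(\{y_i\}_{i=0}^\infty)=\calv(\{x_i\}_{i=0}^\infty)=V$ and
\[
\lim_{n\to\infty}\min_{\sigma\in S_n}\frac{1}{n}\sum_{i=0}^{n-1}d(y_i,T^{\sigma(i)}x)=0.
\]
Since $X$ is compact, $\calv(\{y_i\}_{i=0}^\infty)=V$ is non-empty, so Lemma~\ref{lem:lim-0-V} applied to the sequences $\{y_i\}_{i=0}^\infty$ and $\{T^ix\}_{i=0}^\infty$ gives $\calv_T(x)=\calv(\{y_i\}_{i=0}^\infty)=V$, finishing the proof.

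The proof is essentially a packaging argument: all of the real content has already been put in place by Proposition~\ref{prop:V-AAPO} (every compact connected $V$ arises from some asymptotic average pseudo-orbit) and Lemma~\ref{lem:lim-0-V} (the mean orbital pseudo-metric controls $\calv$). The only point requiring any genuine verification is that limit points of empirical measures of an asymptotic average pseudo-orbit are $T$-invariant, used implicitly in the direction (1) $\Rightarrow$ (2); this is standard and not really an obstacle.
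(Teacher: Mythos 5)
Your proof is correct and takes essentially the same approach as the paper: for (1)~$\Rightarrow$~(2) observe that $\calv(\{x_i\})$ is a non-empty compact connected subset of $\calm_T(X)$, take $x$ with $\calv_T(x)=\calv(\{x_i\})$ and set $y_i=T^ix$; for (2)~$\Rightarrow$~(1) combine Proposition~\ref{prop:V-AAPO} with Lemma~\ref{lem:lim-0-V} exactly as in the proof of Theorem~\ref{thm:AOASP-V}. You even supply the $T$-invariance check for limit points of empirical measures of an asymptotic average pseudo-orbit, which the paper leaves implicit, and you correct a small typo in the paper (it writes $\calm_T(X)=\calv(\{x_i\})$ where it means $\calv_T(x)=\calv(\{x_i\})$).
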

\begin{proof}
(1) $\Rightarrow$ (2)  
Assume that 
 $\{x_i\}_{i=0}^\infty$ is an asymptotic average pseudo-orbit in $X$.
Then $ \calv(\{x_i\}_{i=0}^\infty )$ is a non-empty compact connected subset of $\calm_T(X)$.
By the assumption, there exists $x\in X$ with $\calm_T(X) = \calv(\{x_i\}_{i=0}^\infty )$. 
Then the orbit $\{T^i x\}_{i=0}^\infty$ and the point $x$ are as required.

(2) $\Rightarrow$ (1) is similar to the proof of Theorem \ref{thm:AOASP-V}.
\end{proof} 

\begin{prop}
Let $(X,T)$ be a compact dynamical system. Then the following two statements are equivalent:
\begin{enumerate}
    \item every invariant measure has a generic point;
    \item for every asymptotic average pseudo-orbit  $\{x_i\}_{i=0}^\infty$ of $X$ with $\calv(\{x_i\}_{i=0}^\infty )$ a singleton, there exists a point $x\in X$ such that
\[
\lim_{n\to\infty} \min_{\sigma\in S_n }\frac{1}{n}\sum_{i=0}^{n-1}d(x_i,T^{\sigma(i)}x)=0.
\]
\end{enumerate}
\end{prop}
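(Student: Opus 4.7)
The plan is to reduce both implications to the machinery already developed in this section: Proposition~\ref{prop:V-AAPO} (which converts non-empty compact connected subsets of $\calm_T(X)$ into asymptotic average pseudo-orbits with prescribed set of limit measures) together with Lemmas~\ref{lem:lim-0-V} and~\ref{lem:lim-0-V-eq} (which translate between the singleton-$\calv$ condition and mean orbital convergence to zero). The key observation is that a singleton $\{\mu\}\subset\calm_T(X)$ is trivially compact and connected, so the proposition applies even in this degenerate case, and the lemmas exactly package the two directions.

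For the direction (1) $\Rightarrow$ (2), I would begin by verifying that if $\{x_i\}_{i=0}^\infty$ is an asymptotic average pseudo-orbit with $\calv(\{x_i\}_{i=0}^\infty)=\{\mu\}$, then the unique limit $\mu$ belongs to $\calm_T(X)$. This is standard: for every Lipschitz $f\in C_b(X)$,
\[
\frac{1}{n}\sum_{i=0}^{n-1}|f(Tx_i)-f(x_{i+1})|\leq p_L(f)\cdot\frac{1}{n}\sum_{i=0}^{n-1}d(Tx_i,x_{i+1})\to 0,
\]
while $m(\{x_{i+1}\}_{i=0}^\infty,n)-m(\{x_i\}_{i=0}^\infty,n)=\frac{1}{n}(\delta_{x_n}-\delta_{x_0})$ also vanishes in $\gamma$, so $T_*\mu=\mu$. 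Having secured invariance, I would invoke (1) to pick a generic point $x\in X$ for $\mu$, so that $\calv_T(x)=\{\mu\}=\calv(\{x_i\}_{i=0}^\infty)$. Both sequences now satisfy the singleton hypothesis of Lemma~\ref{lem:lim-0-V-eq}, which applied to $\{x_i\}$ and $\{T^ix\}$ yields the required
\[
\lim_{n\to\infty}\min_{\sigma\in S_n}\frac{1}{n}\sum_{i=0}^{n-1}d(x_i,T^{\sigma(i)}x)=0.
\]

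For the direction (2) $\Rightarrow$ (1), I would fix $\mu\in\calm_T(X)$ and apply Proposition~\ref{prop:V-AAPO} to the non-empty compact connected set $V=\{\mu\}$ to produce an asymptotic average pseudo-orbit $\{x_i\}_{i=0}^\infty$ with $\calv(\{x_i\}_{i=0}^\infty)=\{\mu\}$. Hypothesis (2) then supplies $x\in X$ with
\[
\lim_{n\to\infty}\min_{\sigma\in S_n}\frac{1}{n}\sum_{i=0}^{n-1}d(x_i,T^{\sigma(i)}x)=0.
\]
Setting $y_i=T^ix$ in Lemma~\ref{lem:lim-0-V}, I obtain $\calv_T(x)=\calv(\{x_i\}_{i=0}^\infty)=\{\mu\}$, which is precisely the statement that $x$ is a generic point for $\mu$.

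There is no serious obstacle here, since all the heavy lifting was carried out in Proposition~\ref{prop:V-AAPO} and Lemmas~\ref{lem:lim-0-V}--\ref{lem:lim-0-V-eq}; the argument is essentially a dictionary translation between the ``singleton $\calv$'' language and the ``mean orbital shadowing'' language. The only point requiring a moment's care is the invariance of the unique limit measure in the forward direction, which is classical.
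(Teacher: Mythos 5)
Your proof is correct and follows the same route as the paper: (1)~$\Rightarrow$~(2) notes $\mu$ is invariant, picks a generic point $x$ so $\calv_T(x)=\{\mu\}=\calv(\{x_i\}_{i=0}^\infty)$, and applies Lemma~\ref{lem:lim-0-V-eq}; while (2)~$\Rightarrow$~(1) applies Proposition~\ref{prop:V-AAPO} to $V=\{\mu\}$ and then Lemma~\ref{lem:lim-0-V}, exactly the scheme of Theorem~\ref{thm:AOASP-V}. The only addition is your explicit verification that the unique limit of an asymptotic average pseudo-orbit is $T$-invariant, which the paper treats as folklore; this is a harmless and correct elaboration.
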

\begin{proof}
(1) $\Rightarrow$ (2)  Assume that 
 $\{x_i\}_{i=0}^\infty$ is an asymptotic average pseudo-orbit in $X$ with $\calv(\{x_i\}_{i=0}^\infty )$ a singleton.
Denote by $\mu$ the unique element in $ \calv(\{x_i\}_{i=0}^\infty )$.
Then $\mu$ is an invariant measure.
Let $x$ be a generic point for $\mu$. Then $\calv_T(x)=\{\mu\}$.
According to Lemma \ref{lem:lim-0-V-eq}, one has
\[
\lim_{n\to\infty} \min_{\sigma\in S_n }\frac{1}{n}\sum_{i=0}^{n-1}d(x_i,T^{\sigma(i)}x)=0.
\]

(2) $\Rightarrow$ (1) is similar to the proof of Theorem \ref{thm:AOASP-V}.
\end{proof}

\begin{remark}
Let $ (X,T) $ be a Polish dynamical system.
It is shown in \cite{GK2018}*{Corollary 5.3} that
if $K\subset \per(T)$ is linkable and $\{\mu_x\colon x\in K\}$ is dense in $\calm_T^e(X)$ then for every non-empty compact connected  subset $V$ of $\calm_T(X)$ there exists $x\in X$ with $\calv_T(x)=V$. 
It will be interesting to know whether $(X,T)$ has the asymptotic average shadowing property in this case.
\end{remark}

\noindent \textbf{Acknowledgments:} J. Li was supported in part by NNSF of China (Grant Nos. 12222110, 12171298).
The authors would like to thank Prof. Weisheng Wu for helpful suggestions.

\begin{bibsection}
	
\begin{biblist}[\resetbiblist{99}]

\bib{BC1992}{book}{
author={Block, L. S.},
   author={Coppel, W. A.},
   title={Dynamics in one dimension},
   series={Lecture Notes in Mathematics},
   volume={1513},
   publisher={Springer-Verlag, Berlin},
   date={1992},
   pages={viii+249},
   isbn={3-540-55309-6},
   review={\MR{1176513}},
   doi={10.1007/BFb0084762},
}

\bib{CKLP2022}{article}{
   author={Cai, Fangzhou},
   author={Kwietniak, Dominik},
   author={Li, Jian},
   author={Pourmand, Habibeh},
   title={On the properties of the mean orbital pseudo-metric},
   journal={J. Differential Equations},
   volume={318},
   date={2022},
   pages={1--19},
   issn={0022-0396},
   review={\MR{4385874}},
   doi={10.1016/j.jde.2022.02.019},
}	

\bib{DGS1976}{book}{
   author={Denker, Manfred},
   author={Grillenberger, Christian},
   author={Sigmund, Karl},
   title={Ergodic theory on compact spaces},
   series={Lecture Notes in Mathematics, Vol. 527},
   publisher={Springer-Verlag, Berlin-New York},
   date={1976},
   pages={iv+360},
   review={\MR{0457675}},
}

\bib{DTY2015}{article}{
   author={Dong, Yiwei},
   author={Tian, Xueting},
   author={Yuan, Xiaoping},
   title={Ergodic properties of systems with asymptotic average shadowing
   property},
   journal={J. Math. Anal. Appl.},
   volume={432},
   date={2015},
   number={1},
   pages={53--73},
   issn={0022-247X},
   review={\MR{3371221}},
   doi={10.1016/j.jmaa.2015.06.046},
}

\bib{Dud2002}{book}{
   author={Dudley, R.~M.},
   title={Real analysis and probability},
   series={Cambridge Studies in Advanced Mathematics},
   volume={74},
   note={Revised reprint of the 1989 original},
   publisher={Cambridge University Press, Cambridge},
   date={2002},
   pages={x+555},
   isbn={0-521-00754-2},
   review={\MR{1932358}},
   doi={10.1017/CBO9780511755347},
}

\bib{Gar2018}{book}{
   author={Garling, D.~J.~H.},
   title={Analysis on Polish spaces and an introduction to optimal
   transportation},
   series={London Mathematical Society Student Texts},
   volume={89},
   publisher={Cambridge University Press, Cambridge},
   date={2018},
   pages={ix+348},
   isbn={978-1-108-43176-7},
   review={\MR{3752187}},
}		

\bib{GK2018}{article}{
   author={Gelfert, Katrin},
   author={Kwietniak, Dominik},
   title={On density of ergodic measures and generic points},
   journal={Ergodic Theory Dynam. Systems},
   volume={38},
   date={2018},
   number={5},
   pages={1745--1767},
   issn={0143-3857},
   review={\MR{3820000}},
   doi={10.1017/etds.2016.97},
}

\bib{Gu2007}{article}{
   author={Gu, Rongbao},
   title={The asymptotic average shadowing property and transitivity},
   journal={Nonlinear Anal.},
   volume={67},
   date={2007},
   number={6},
   pages={1680--1689},
   issn={0362-546X},
   review={\MR{2326020}},
   doi={10.1016/j.na.2006.07.040},
}

\bib{KLO2017}{article}{
   author={Kwietniak, Dominik},
   author={\L \polhk acka, Martha},
   author={Oprocha, Piotr},
   title={Generic points for dynamical systems with average shadowing},
   journal={Monatsh. Math.},
   volume={183},
   date={2017},
   number={4},
   pages={625--648},
   issn={0026-9255},
   review={\MR{3669782}},
   doi={10.1007/s00605-016-1002-1},
}

\bib{KO2010}{article}{
   author={Kulczycki, Marcin},
   author={Oprocha, Piotr},
   title={Exploring the asymptotic average shadowing property},
   journal={J. Difference Equ. Appl.},
   volume={16},
   date={2010},
   number={10},
   pages={1131--1140},
   issn={1023-6198},
   review={\MR{2738888}},
   doi={10.1080/10236190802616643},
}

\bib{Oxt1952}{article}{
   author={Oxtoby, John C.},
   title={Ergodic sets},
   journal={Bull. Amer. Math. Soc.},
   volume={58},
   date={1952},
   pages={116--136},
   issn={0002-9904},
   review={\MR{47262}},
   doi={10.1090/S0002-9904-1952-09580-X},
}

\bib{Par1961}{article}{
   author={Parthasarathy, K. R.},
   title={On the category of ergodic measures},
   journal={Illinois J. Math.},
   volume={5},
   date={1961},
   pages={648--656},
   issn={0019-2082},
   review={\MR{148850}},
}

\bib{Rue2017}{book}{
   author={Ruette, Sylvie},
   title={Chaos on the interval},
   series={University Lecture Series},
   volume={67},
   publisher={American Mathematical Society, Providence, RI},
   date={2017},
   pages={xii+215},
   isbn={978-1-4704-2956-0},
   review={\MR{3616574}},
   doi={10.1090/ulect/067},
}

\bib{Sig1974}{article}{
   author={Sigmund, Karl},
   title={On dynamical systems with the specification property},
   journal={Trans. Amer. Math. Soc.},
   volume={190},
   date={1974},
   pages={285--299},
   issn={0002-9947},
   review={\MR{352411}},
   doi={10.2307/1996963},
}

\bib{Sig1977}{article}{
   author={Sigmund, Karl},
   title={On minimal centers of attraction and generic points},
   journal={J. Reine Angew. Math.},
   volume={295},
   date={1977},
   pages={72--79},
   issn={0075-4102},
   review={\MR{482710}},
   doi={10.1515/crll.1977.295.72},
}

\bib{XZ2021}{article}{
   AUTHOR = {Xu, Leiye},
   AUTHOR = {Zheng, Liqi},
   TITLE = {Weak mean equicontinuity for a countable discrete amenable group action},
 journal={J. Dyn. Diff. Equat.},
 date={2022},
 pages = {published online},
 doi={10.1007/s10884-022-10201-x}
}

\bib{ZZ2020}{article}{
   author={Zheng, Liqi},
   author={Zheng, Zuohuan},
   title={A new metric for statistical properties of long time behaviors},
   journal={J. Differential Equations},
   volume={269},
   date={2020},
   number={4},
   pages={2741--2773},
   issn={0022-0396},
   review={\MR{4097233}},
   doi={10.1016/j.jde.2020.02.010},
}

\end{biblist}
\end{bibsection}

\end{document}